\numberwithin{equation}{section}
\numberwithin{table}{section}
\numberwithin{figure}{section}
\theoremstyle{plain}
\newtheorem{theorem}{Theorem}[section]
\newtheorem{lemma}[theorem]{Lemma}
\newtheorem{proposition}[theorem]{Proposition}
\theoremstyle{definition}
\newtheorem{definition}{Definition}[section]
\newtheorem{assumption}[definition]{Assumption}
\newcommand{\figref}[1]{Fig.~\ref{fig:#1}}
\newcommand{\tabref}[1]{Tab.~\ref{tab:#1}}
\newcommand{\Ncells}{N_{\mathrm{cells}}}
\newcommand{\Nvox}{N_{\mathrm{vox}}}
\newcommand{\muprol}{\mu_{\mathrm{prol}}}
\newcommand{\mudeath}{\mu_{\mathrm{death}}}
\newcommand{\mudeg}{\mu_{\mathrm{deg}}}
\newcommand{\muprolbar}{\bar{\mu}_{\mathrm{prol}}}
\newcommand{\mudeathbar}{\bar{\mu}_{\mathrm{death}}}
\newcommand{\lambdabar}{\bar{\lambda}}
\newcommand{\kappaprol}{\kappa_{\mathrm{prol}}}
\newcommand{\kappadeath}{\kappa_{\mathrm{death}}}
\newcommand{\Kprol}{K_{\mathrm{prol}}}
\newcommand{\Kdeath}{K_{\mathrm{death}}}
\newcommand{\rpeq}{r_p^{\mathrm{eq}}}
\newcommand{\rqeq}{r_q^{\mathrm{eq}}}
\newcommand{\rneq}{r_n^{\mathrm{eq}}}
\newcommand{\cout}{c_{\mathrm{out}}}
\newcommand{\pext}{p^{(\mathrm{ext})}}
\newcommand{\Dout}{D_{\mathrm{ext}}}
\newcommand{\interface}[1]{r_{#1}}
\newcommand{\subquant}[2]{#1^{(#2)}} 
\newcommand{\pert}[2]{\tilde{#1}^{(#2)}} 
\newcommand{\rpert}[1]{\tilde{r}_{#1}} 
\newcommand{\rcoef}[1]{\alpha_k^{(#1)}}
\newcommand{\ppert}[1]{\tilde{p}^{(#1)}}
\newcommand{\pcoef}[1]{\gamma_k^{(#1)}}
\newcommand{\pextcoef}{\gamma_k^{(\mathrm{ext})}}
\newcommand{\cpert}[1]{\tilde{c}^{(#1)}}
\newcommand{\ccoef}[1]{\beta_k^{(#1)}}
\newcommand{\cextpert}{\tilde{c}^{(\mathrm{ext})}}
\newcommand{\cextcoef}{\beta_k^{(\mathrm{ext})}}
\newcommand{\Omegaext}{\Omega_{\mathrm{ext}}}
\newcommand{\Ordo}[1]{\mathcal{O}\left(#1\right)}
\newcommand{\review}[1]{#1}
\title{Morphological stability for \textit{in silico} models of
  avascular tumors}
\author[1]{Erik Blom\orcidlink{0009-0005-8141-6802}}
\author[1]{Stefan Engblom\thanks{\textit{Corresponding author.}  URL:
    \url{http://user.it.uu.se/~stefane}, telephone +46-18-471 27 54,
    fax +46-18-51 19 25.}\orcidlink{0000-0002-3614-1732}}
\affil[1]{{\footnotesize Division of Scientific Computing, Department
    of Information Technology, Uppsala University, SE-751 05 Uppsala,
    Sweden. E-mail: \href{mailto:erik.blom@it.uu.se}{erik.blom},
    \href{mailto:stefane@it.uu.se}{stefane@it.uu.se}.}}
\date{\today}
\begin{document}

\selectlanguage{english}

\maketitle

\begin{abstract}
  The landscape of computational modeling in cancer systems biology is
  diverse, offering a spectrum of models and frameworks, each with its
  own trade-offs and advantages. Ideally, models are meant to be
  useful in refining hypotheses, to sharpen experimental procedures
  and, in the longer run, even for applications in personalized
  medicine. One of the greatest challenges is to balance model realism
  and detail with experimental data to eventually produce useful
  data-driven models.

  We contribute to this quest by developing a transparent, highly
  parsimonious, first principle \textit{in silico} model of a growing
  avascular tumor. We initially formulate the physiological
  considerations and the specific model within a stochastic cell-based
  framework. We next formulate a corresponding mean-field model using
  partial differential equations which is amenable to mathematical
  analysis. Despite a few notable differences between the two models,
  we are in this way able to successfully detail the impact of all
  parameters in the stability of the growth process and on the
  eventual tumor fate of the stochastic model. This facilitates the
  deduction of Bayesian priors for a given situation, but also
  provides important insights into the underlying mechanism of tumor
  growth and progression.

  Although the resulting model framework is relatively simple and
  transparent, it can still reproduce the full range of known emergent
  behavior. We identify a novel model instability arising from
  nutrient starvation and we also discuss additional insight
  concerning possible model additions and the effects of those. Thanks
  to the framework's flexibility, such additions can be readily
  included whenever the relevant data become available.

  \bigskip
  \noindent
  \textbf{Keywords:} Computational tumorigenesis, Cell population
  modeling, Emergent property, Darcy's law, Saffman-Taylor
  instability.

  \medskip
  \noindent
  \textbf{AMS subject classification:} \textit{Primary:} 35B35, 92C10;
  \textit{secondary:} 65C40, 92-08.

%
%
%
%

  \medskip
  \noindent
  \textbf{Statements and Declarations:} This work was partially funded
  by support from the Swedish Research Council under project number VR
  2019-03471. The authors declare no competing interests.

\end{abstract}

\section{Introduction}
\label{sec:intro}

Tumors are highly complicated biological systems, yet constitute a
concrete example of cellular self-organization processes amenable to
modeling \textit{in silico} \cite{bru2003universal}. In a cancerous
tumor, the cells have undergone several significant mutations and
obtained distinct \textit{hallmarks} providing the population
with remarkable growth capabilities \cite{hanahan2000hallmarks,
  hanahan2011hallmarks}. Furthermore, populations comprising large
numbers of cells interact on multiple scales, yielding a range of
emergent phenomena \cite{deisboeck2009collective}, which can be
studied using computational models based on knowledge of single-cell
behavior. To the modeler's aid in this regard, biological data streams
nowadays contain detailed features at the individual cell level such
as cell size and -type, mutation- and growth rate, molecular
constituents, and gene expression \cite{saadatpour2015single,
  cermak2016high, armbrecht2017recent}.

Complementing biological experiments, mathematical models can in
addition provide explanations to observed data, concerning, e.g.,
drug-response in tumor growth, with potential applications in
precision medicine \cite{yin2019review, barbolosi2016computational}.
Progress in cell biology has led to a good understanding of
intra-cellular processes which unlocks the possibility to model these
systems from fundamental principles, to translate `word models'
formulated from biological experiments into mathematical and
computational models, and to test the features of these models
\cite{roose2007mathematical}. Often quoted uses of computational
models include the testing of hypotheses, the investigation of
causality, and the integration of knowledge when comparing \textit{in
  vitro} and \textit{in vivo} data
\cite{brodland2015computational}. Bayesian inference methods present a
means to quantitatively investigate these matters, provided there
exist appropriate data and meaningful priors associated with the model
parameters.

%

Several cell population models exist in the literature, ranging from
continuous to agent-based to hybrid models, and taking place at
various scales \review{\cite{schluter2015multi, 
  fletcher2013implementing, szabo2013cellular, deisboeck2011multiscale, cristini2003nonlinear}}. Such models may reach
a predictive power, where agreement/disagreement with biological data
can advance our understanding of mechanistic relations within the
biological systems \cite{jin2016reproducibility,
  frieboes2009prediction, bearer2009multiparameter, drasdo2005single}.
Pertinent to the present work, previous research shows how analyzing
the emergent morphology of cell population models can provide insight
into the role of the model parameters \review{\cite{giverso2016emerging, gerlee2007stability, anderson2006tumor}},
promoting future use of Bayesian methods. Such analysis has, for
example, enabled modelers to analyze the behavior of the invasive
fronts of tumor models and their response to parameter changes
representing vascularization, nutrient availability
\cite{cristini2005morphologic, lowengrub2009nonlinear}, and cell-cell
adhesion effects \cite{byrne1996modelling, scianna2012hybrid}.

Motivated in part by improvements of \textit{in vitro} techniques for
obtaining detailed time-series tumor and single-cell data and the
current trend in computational science towards data-driven modeling,
we present and analyze a basic continuous mathematical model of
avascular tumor growth, here derived from a previously developed
cell-based model \cite{engblom2018scalable}. Our aim is that the
model should be highly parsimonious in order to cope with issues of
model identifiability. For this purpose the \textit{in silico} tumor's
fate should be well understood when regarded as a map from parameters
to simulation end-result. Initial results from an earlier version of
the model \cite{engblom2018scalable} display boundary instabilities,
akin to those discussed in \cite{greenspan_growth_1976}, which we
analyze thoroughly. The self-regulating properties of avascular tumors
concerning size that have been observed \textit{in vitro}
\cite{folkman1973self} and \textit{in silico} \cite{grimes2016role}
motivate a careful investigation into the model capabilities in this
regard.

We have structured the paper as follows. In \S\ref{sec:model} we
summarize the stochastic cell-based tumor model as well as the
associated mean-field space-continuous version. We analyze the latter
in \S\ref{sec:analysis}, assuming radially symmetric solutions first,
and then via linear stability analysis. In \S\ref{sec:results} we
\review{investigate via numerical examples the key aspects of} the
analysis as well as its relevance for the stochastic model. A
concluding discussion is found in \S\ref{sec:discussion}.

\section{Stochastic modeling of avascular tumors}
\label{sec:model}

An advantage held by stochastic models is that they implicitly define
a consistent likelihood and thus formally have the potential to be
employed in Bayesian modeling when confronted with data. We summarize
our stochastic framework in \S\ref{subsec:DLCM} and the basic
stochastic tumor model in \S\ref{subsec:DLCM_model}. We next derive a
corresponding mean-field version in \S\ref{subsec:PDE_model}, which
has the distinct advantage of being open to mathematical analyses.

\subsection{Stochastic framework}
\label{subsec:DLCM}

The Darcy Law Cell Mechanics (DLCM) framework
\cite{engblom2018scalable} is a cell-based stochastic modeling
framework where the cells are explicitly represented and the rates of
their state updates, e.g., movement, proliferation, death, etc., are
determined and govern the corresponding events in a continuous-time
Markov chain. Movements of the cells generally follow \emph{Darcy's
  law} for fluid flow in a porous environment, but since the framework takes
place in continuous time, other types of cell transport are easily
incorporated.

The spatial domain is discretized into $i = 1,2,..., \Nvox$ voxels
$v_i$, and populated by a total of $\Ncells$ cells. The DLCM framework
can be used over any grid for which a consistent discrete Laplace
operator can be derived. Each voxel may be empty or contain some
number of cells and if this number exceeds the voxel's \emph{carrying
  capacity}, the cells will exert a pressure onto the cells in the
surrounding voxels, see \figref{DLCM_pressure}. The pressure
propagates through the considered domain and the local pressure
gradient induces a cell flow. The simplest implementation allows each
voxel to be populated by $u_i \in \{0, 1, 2\}$ cells at any time, thus
with a carrying capacity of 1. \review{Note that carrying capacity
  here does not refer to the maximum possible number of cells in a
  voxel, but rather to the capacity beyond which a voxel is no longer
  in a mechanically relaxed state.}

\begin{figure}
  \centering
  \begin{subfigure}[b]{.45\textwidth}
    \centering
    \includegraphics[width=0.75\textwidth]{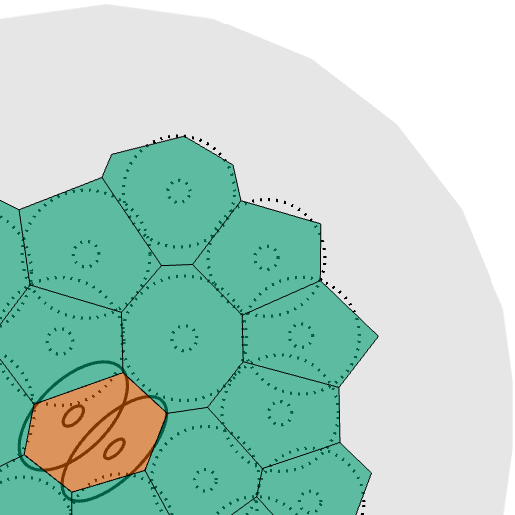}
    \subcaption{\label{fig:DLCM_pressure_a}}
  \end{subfigure}
  \begin{subfigure}[b]{.45\textwidth}
    \centering
    \includegraphics[width=0.75\textwidth]{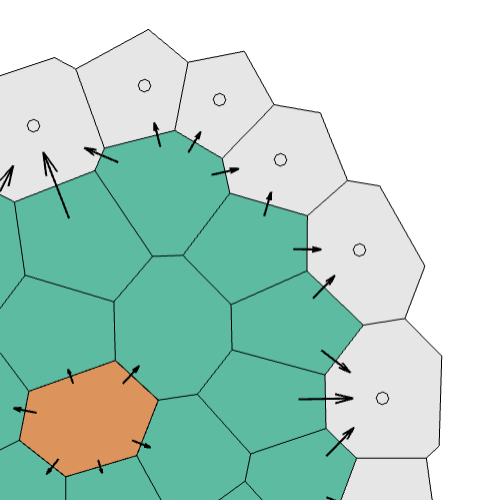}
    \subcaption{\label{fig:DLCM_pressure_b}}
  \end{subfigure}
  \caption{Cell population representation in the DLCM framework using
    a Voronoi tessellation. Green voxels represent singly occupied
    voxels and red voxels represent doubly occupied voxels as shown
    explicitly in \figref{DLCM_pressure_a}, \review{where the ellipses
      indicate the corresponding underlying population of cells. The
      grey area highlights a region of empty voxels.} The doubly
    occupied voxels exceed the carrying capacity and exert pressure on
    the surrounding cells. The movement rates are visualized in
    \figref{DLCM_pressure_b}, where the arrows represent movement rate
    and direction for a subset of the possible movements. \review{The
      grey voxels represent empty voxels that cells may migrate into.}
    Adapted from \cite[Fig.~2.1]{engblom2018scalable}.}
  \label{fig:DLCM_pressure}
\end{figure}

Although the state $u_i$ takes on discrete values in each voxel and at
any given time, the governing model is derived from a continuous
assumptions where the corresponding cell density is then $u =
u(x,t)$. We let $u$ be governed by the continuity equation
\begin{equation}
    \frac{\partial u}{\partial t} + \nabla \cdot I = 0,
    \label{eq:cont}
\end{equation}
\review{where $I$ is the flux of $u$}. There are three main
assumptions in the DLCM framework, with the first assumption pointing
to the central role of the flux $I$:
\begin{assumption}
  \label{ass:DLCM}
  Consider the discrete tissue formed from the population of cells
  distributed over the grid. We assume that:
  \begin{enumerate}
  \item\label{it:1} The tissue is in mechanical equilibrium when all
    cells are placed in a voxel of their own.
  \item\label{it:2} The cellular pressure of the tissue relaxes
    rapidly to equilibrium in comparison with any other mechanical
    processes of the system.
  \item\label{it:3} The cells in a voxel occupied by $n$ cells may
    only move into a neighboring voxel if it is occupied by less
    than $n$ cells.
  \end{enumerate}
\end{assumption}

\newcommand{\Assone}{Assumption~\ref{ass:DLCM}(\ref{it:1})}
\newcommand{\Asstwo}{Assumption~\ref{ass:DLCM}(\ref{it:2})}
\newcommand{\Assthree}{Assumption~\ref{ass:DLCM}(\ref{it:3})}

From \Assone\ it is clear that random (e.g., Brownian) motion around a
voxel center is ignored, and, therefore, that only voxels with cells
above the carrying capacity are considered pressure
sources. \review{As in \cite{engblom2018scalable}, the flux} is determined from
the pressure gradient in the form of Darcy's law which can be derived as a
limit for flow through porous media \cite{whitaker1986flow}:
\review{
\begin{equation}
  I = -D \nabla p,
  \label{eq:flux}
\end{equation}
}
where $p$ is the pressure and the Darcy constant $D$ can be interpreted as the
ratio of the medium permeability $\kappa $ to its dynamic viscosity
$\mu$, $D := \kappa / \mu$.

The relation between pressure and cell population is completed by
\review{assuming a constitutive relation in the form of a heat
  equation for the pressure and using \Asstwo\ to arrive at the
  stationary relation}
\begin{equation}
  -\Delta p = s(u),
  \label{eq:p}
\end{equation}
where $s(u)$ is the pressure source \review{which equals to one for
  voxels above the carrying capacity, $u_i > 1$, and zero otherwise.}

Finally, we detail the flux parameter $D$ in \eqref{eq:flux}. Let
$R(e)$ denote the rate of an event $e$, and let $i \rightarrow j$
denote the event that a cell moves from voxel $v_i$ to $v_j$. With
unit carrying capacity only two distinct movements rates are possible
according to \Assthree: one for cells moving into an empty voxel, and
one for cells moving into an already occupied voxel,
\review{
\begin{align}
  \label{eq:rates}
  \left. \begin{array}{rcl}
           R(i \rightarrow j; \quad u_i \geq 1, \quad u_j = 0) &=& D_1
                                                                   I(i \to j)\\
           R(i \rightarrow j; \quad u_i > 1, \quad u_j = 1) &=& D_2
                                                                I(i \to j)
         \end{array} \right\}
\end{align}
} where $D_1$ and $D_2$ are (possibly equal) conversion factors from
units of pressure gradient to movement rate for the respective
case. Here, \review{$I(i \to j)$} is the pressure gradient integrated
over the boundary shared between the voxels $v_i$ and
$v_j$. \review{To enable a comparision of the DLCM model with a
  corresponding PDE model the effect of surface tension needs to be
  included. As a consequence of this, we also need to include
  migration between neighboring singly occupied voxels on boundaries
  where surface tension is added, and then with a rate coefficient
  equal to $D_1$. This is an event which is formally not allowed in
  the original framework, cf.~\cite{engblom2018scalable}.}

To sum up, a population of cells occupying a grid may be evolved in
time by first solving for the pressure in \eqref{eq:p} using the
Laplacian on the grid, and then converting the pressure gradient into
rates via \eqref{eq:flux} and \eqref{eq:rates}. The rates are now
interpreted as competing Poissonian events for the corresponding
cellular movements to be simulated as a continuous-time Markov
chain. Any other dynamics taking place in continuous time are thus
readily incorporated in a consistent way.

\subsection{\review{Framework tumor model}}
\label{subsec:DLCM_model}

As a candidate for a `minimal' avascular tumor model we consider the
one presented in \cite{engblom2018scalable} which consists of a single
cancerous cell type in three different states: \textit{proliferating},
\textit{quiescent} (i.e., dormant), and \review{\textit{necrotic}
  cells}. As a matter of convenient implementation the range of $u_i$
can be extended to include $u_i = -1$ which represents a voxel
containing a dead \review{necrotic} cell so that
$u_i \in \{ -1, 0, 1, 2\}$. Also, let $\Omega$ denote the tumor domain
with $u_i \neq 0$ and let $\Omegaext$ denote the entire computational
domain.

An avascular tumor has to rely on oxygen and nutrients to diffuse
through the surrounding tissue to reach the tumor, \review{a process
assumed to be much faster than cell migration, growth, and death. As
such, nutrients are} readily modeled by \review{a stationary heat
equation} with a boundary condition on the external boundary
$\partial\Omegaext$ (far away from the tumor boundary $\partial
\Omega$) as
\begin{align}
  \left. \begin{array}{rcl}
            -\Delta c &=& - \lambda a(u) \\
           c &=& \cout \quad \text{ on } \partial \Omegaext
         \end{array} \right\}
         \label{eq:DLCM_oxygen_law}
\end{align}
where $c$ is the concentration variable understood as a proxy variable
for oxygen and any other nutrients required for the cellular
metabolism. Further, $\lambda$ is the \review{ratio of the oxygen
  consumption rate to the oxygen diffusion rate, and} $a(u_i)$ is the
number of living cells in the voxel $i$, i.e., $a(u_i) =
\max(u_i,0)$. The rates describing the tumor growth are then defined
as follows: cells \review{are in the proliferating state if
  $c_i \geq \kappaprol$ and then divide at rate $\muprol$}, where
$\kappaprol$ is the minimum oxygen concentration required for cell
proliferation. A cell dies \review{and then becomes necrotic} at rate
$\mudeath$ if $c_i < \kappadeath$, where $\kappadeath$ is the minimum
oxygen concentration required for individual cell survival. Finally,
\review{necrotic} cells degrade at rate $\mudeg$ to free up the voxel
they are in. \review{Cells in voxels at intermediate oxygen levels are
  in the quiescent state.} Note that cells \review{instantly} switch
between all living states provided the oxygen concentration allows for
it.

At the tumor boundary a pressure condition needs to be imposed in
order to capture the net effect of cell-cell adhesion as well as the
interactions between cancerous and healthy cells. We let the
phenomenological constant $\sigma$ represent this via a Young-Laplace
pressure drop proportional to the boundary curvature $C$. Denoting by
$\pext$ the ambient pressure outside the tumor (i.e., in
$\Omegaext \setminus \Omega $) we thus have the Dirichlet condition
\begin{equation}
  p = \pext - \sigma C, \qquad \text{ on } \partial\Omega.
  \label{eq:BC_Young-Laplace}
\end{equation}
\review{An alternative approach to \eqref{eq:BC_Young-Laplace} for
  including surface tension effects directly on the microscopic level
  can be found in \cite{engblom2018scalable}, where the local adhesive
  forces work passively to resist cell migration directly by negative
  contributions to \eqref{eq:rates}. Local implementations of adhesion
  that do not change the population pressure field as with
  \eqref{eq:BC_Young-Laplace} are unfortunately not fully consistent
  with a pressure-driven migration law, thus obscuring a mechanistic
  understanding. The implementation of \eqref{eq:BC_Young-Laplace}
  along with other modifications to the DLCM framework are further
  discussed in \S\ref{apx:numerical_methods}.}

A summary of the parameters of the proposed model is found in
\tabref{model_parameters}.

\begin{table}
\centering
\begin{tabular}{p{3cm}p{10cm}}
  \hline
  Parameter & Description \\
  \hline
  $D$ & Ratio medium permeability to dynamic viscosity
      $[f^{-1}l^{3}t^{-1}]$ \\
  $\lambda$ & \review{Ratio of oxygen consumption to diffusion rate $[l^{-2}]$}\\
  $\cout$ & Oxygen concentration at oxygen source $[l^{-2}]$\\
  $\muprol$ & Rate of cell proliferation $[t^{-1}]$ \\
  $\mudeath$ & Rate of cell death $[t^{-1}]$ \\
  $\mudeg$ & Rate of dead cell degradation $[t^{-1}]$\\
  $\kappaprol$ & Oxygen concentration threshold for cell proliferation \review{$[l^{-2}]$} \\
  $\kappadeath$ & Oxygen concentration threshold for cell death \review{$[l^{-2}]$} \\
  $\sigma$ & Surface tension coefficient $[f]$ \\
  \hline
\end{tabular}
\caption{Parameters of the cell-based tumor model in two
  dimensions. The same parameters are used in the corresponding PDE
  model (except for $\mudeg$ which is not used) and are nonnegative
  for both models. The units $t$, $l$, $f$ correspond to units of
  time, length, and force, respectively.}
 \label{tab:model_parameters}
\end{table}

\subsection{Mean-field PDE tumor model}
\label{subsec:PDE_model}

We next derive the corresponding `minimal' partial differential
equations (PDE) model, constructed to very closely mimic the
mean-field of the stochastic model. \review{Certain aspects of the
  DLCM model, e.g., the exclusion principle in \eqref{eq:rates}, which
  excludes migration events between neighboring singly occupied
  voxels, but also \eqref{eq:DLCM_oxygen_law}, where a nonlinear
  interaction between nutrients and the cell population takes place,
  invalidate the assumption of independence between the states of
  different voxels, cf.~\cite{davies2014derivation}. This complicates
  formulating a mean-field PDE exactly. However, the underlying
  continuous physics of the DLCM model provides an appropriate
  starting point for the construction of a corresponding PDE model.}
The derivation is essentially based on mass balance with a
\review{cellular} growth- and death rates, and a velocity field
proportional to the pressure gradient.

Seeing as cells comprise mostly water we assume that they are
incompressible. \review{This implies that the material derivative is
  zero, and hence the conservation law governing the tumor cell
  density $\rho$ in a velocity field $\boldsymbol{v}$ becomes}
\begin{equation}
  \frac{\partial \rho}{\partial t} + \nabla \cdot (\boldsymbol{v}
  \rho) =
  \underbrace{\frac{\partial \rho}{\partial t} + \boldsymbol{v} \cdot
    \nabla \rho}_{= 0 \text{ by incompressibility}} +
  \rho \nabla \cdot \boldsymbol{v}
  = \Gamma,
  \label{eq:PDE_conservation_law}
\end{equation}
where $\Gamma$ is cell growth and loss due to proliferation or death,
respectively, and remains to be defined. Assuming that the cells move
as a viscous fluid with low Reynolds number through a porous medium, 
the velocity field for the cell density is governed by
Darcy's law \cite{whitaker1986flow}
\begin{equation}
  \boldsymbol{v} = - D \nabla p,
  \label{eq:PDE_darcys}
\end{equation}
where the porous media that the cells reside in is the extra-cellular
matrix (ECM), whose permeability is one of two factors determining
$D$. Combining \eqref{eq:PDE_conservation_law} and
\eqref{eq:PDE_darcys} and assuming a homogeneous permeability within
the tissue domain we arrive at
\begin{equation}
  - \rho \Delta p = \frac{\Gamma}{D}.
  \label{eq:PDE_pressure_law}
\end{equation}

For the source term $\Gamma$ we mimic the stochastic model and let the
\review{cellular} growth and death rates be constant and determined by oxygen
thresholds. We thus define $\Gamma$ as
\begin{align}
  \Gamma =
  \begin{cases} \muprol \times \rho & c \geq \kappaprol \\
    0 & \kappadeath \leq c < \kappaprol \\ 
    -\mudeath \times \rho & c < \kappadeath,
  \end{cases}
  \label{eq:PDE_source_term}
\end{align}
which defines the proliferative, quiescent, and necrotic region,
respectively (see \figref{tumor_region_schematic}).  These relations
close the pressure relation \eqref{eq:PDE_pressure_law}.

Similar to the cell-based formulation, we assume a Young-Laplace
pressure drop at the outer tumor boundary, which obeys
\eqref{eq:BC_Young-Laplace}. This surface tension effect arises from
various cell-cell adhesion effects --- the loss of which, due to loss
of E-cadherin function in cells, is associated with tumor metastasis
and tissue invasion \cite{hanahan2000hallmarks}. Let $\Omega$ here
denote the region with $\rho > 0$ akin to the tumor domain of the
stochastic model. By Darcy's law, we then have \review{that
  $\partial\Omega^-$ moves with the velocity
\begin{equation}
  \boldsymbol{v}_{\partial\Omega^-} = (-D\nabla p) \cdot \boldsymbol{n},
  \label{eq:PDE_compatability_condition}
\end{equation}
}
where $\boldsymbol{n}$ is the interface normal vector and
$\partial\Omega^-(t)$ denotes the boundary as approached from inside
$\Omega$. The condition \eqref{eq:PDE_compatability_condition}
connects the velocity field with the movement of the tumor boundary.

\sidecaptionvpos{figure}{c}
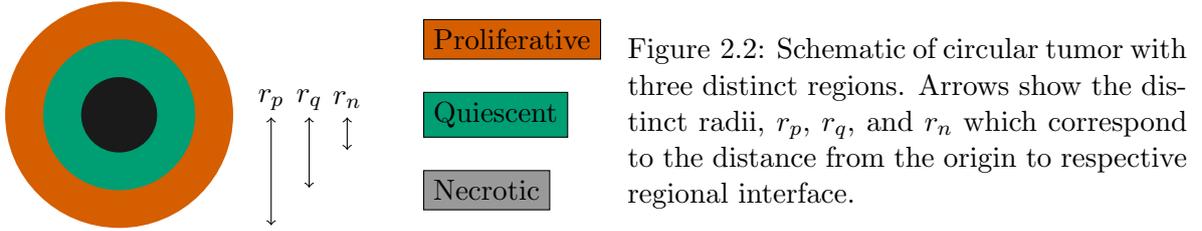
\begin{SCfigure}
  \begin{tikzpicture}
    \definecolor{vermillion}{RGB}{213,94,0}
    \fill[vermillion] (0,0) circle (1.5cm);
    \definecolor{bluish_green}{RGB}{0,158,115}
    \fill[bluish_green] (0,0) circle (1cm);
    \fill[black!90] (0,0) circle (0.5cm);
    \node (A0) at (2, 0.2) {$r_p$};
    \node (A) at (2, 0.1) {};
    \node (B) at (2, -1.6) {};
    \node (C0) at (2.5, 0.2) {$r_q$};
    \node (C) at (2.5, 0.1) {};
    \node (D) at (2.5, -1.1) {};
    \node (E0) at (3, 0.2) {$r_n$};
    \node (E) at (3, 0.1) {};
    \node (F) at (3, -0.6) {};
    \draw[<->]
  (A) -- (B);
      \draw[<->]
  (C) -- (D);
        \draw[<->]
  (E) -- (F);
    \node[draw,rectangle,fill=vermillion,right=2cm] at (2,1) {Proliferative};
    \node[draw,rectangle,fill=bluish_green,right=2cm] at (2,0) {Quiescent};
    \node[draw,rectangle,fill=black!40,right=2cm] at (2,-1) {Necrotic};
  \end{tikzpicture}
  \caption{Schematic of circular tumor with three distinct
    regions. Arrows show the distinct radii, $r_p$, $r_q$, and $r_n$
    which correspond to the distance from the origin to respective
    regional interface.}
  \label{fig:tumor_region_schematic}
\end{SCfigure}

The pressure boundary conditions concern the external medium that the
tumor grows within. Recall that $\Omegaext$ is the domain containing
both $\Omega$ and the external medium. By assuming that the external
medium obeys laws similar to the tumor tissue, we can summarize the
medium's impact on the tumor growth through the tumor boundary
conditions. We start from the assumption that the pressure propagates
freely throughout the external medium outside the tumor and, for
consistency of the complete two-tissue system, the external medium is
assumed to abide by the same assumptions as the tumor tissue (i.e.,
incompressibility, Darcy's law, and conservation of mass). We further
assume that growth and death of the external tissue is negligible and,
thus, arrive at the governing equation for the outer pressure, $\pext$
in $\Omegaext \setminus \Omega$:
\begin{eqnarray}
    \Delta \pext(\boldsymbol{r}) &=& 0,
   \label{eq:outer_pressure_finite}
\end{eqnarray}
where the solution is undetermined up to a specified boundary
condition. The velocities of the tumor and the external tissue are
both determined by Darcy's law, but are allowed different Darcy
coefficients, $D$ and $\Dout$, respectively. The velocities are
assumed compatible at the interface and the complete set of boundary
conditions on $\partial \Omega$ thus reads as
\begin{align}
    p &= \pext - \sigma C,
    \label{eq:BC_dirichlet_general} \\
     \boldsymbol{n} \cdot D\nabla p &= \boldsymbol{n} \cdot
     \Dout\nabla \pext.
     \label{eq:BC_compatability}
\end{align}
Thus, we assume
that the physical extent of the region between the tissues (where cell
mixing might occur) is negligible in comparison with the spatial scale
of the model. The nondimensional coefficient is expressed as
$\hat{D}_{\text{ext}} = \Dout/D$, but for brevity we omit the hat in
the following analysis. The compatibility condition
\eqref{eq:BC_compatability} allows for an investigation into the
effects of varying the stiffness between the tumor and the external
tissue due to, e.g., an increase in collagen density
\cite{valero2018combined}. For simplicity, we assume that the ECM
permeability is homogeneous and time-independent across the domain of
tissues, hence implicitly assuming that breakdown and remodeling of
ECM that could affect tumor progression
\cite{quail2013microenvironmental} are negligible.

Finally, oxygen diffuses in towards the tumor through the surrounding
tissue from a source (e.g., a vessel) far away with regards to the
spatial scale of the system. Akin to the cell-based model, \review{we
  consider a stationary heat equation but with a different source term
  as}
\begin{equation}
   - \Delta c = 
  \begin{cases}
  -\lambda \rho, \quad &c \geq \kappadeath \\ 
  0, \quad &c < \kappadeath \text{ on } \Omegaext
  \end{cases}\\
  \label{eq:PDE_oxygen_law}
\end{equation}
with $c = \cout$ on $\partial \Omegaext$, and where $\lambda$ is
\review{ratio of the} consumption rate per cell \emph{density}
\review{to oxygen diffusion rate} in the PDE setting. We assume that
$\partial \Omegaext$ is radially symmetric and lies at a distance $R$
from the domain origin and we also make the simplifying assumption
that the external tissue consumes negligible oxygen.

A suitable choice for the characteristic length is $l_c = R$ as it
remains constant during growth. Nondimensionalization of the model
assuming a radially symmetric tumor then yields the characteristic
units
\begin{eqnarray}
  l_c = R, \quad v_c = \muprol R , \quad t_c =
  1/\muprol, \quad p_c = \muprol R^2/D, 
    \label{eq:characteristic_units}
\end{eqnarray}
with the dimensionless parameters
$\hat{\sigma} = D/(\muprol R^3) \times \sigma$ and
$\hat{\mu}_{\text{death}} = \mudeath / \muprol$. We nondimensionalize
the oxygen parameters independently of the pressure and arrive at the
characteristic units and additional dimensionless parameter,
respectively, as
\begin{align}
  \left.
  \begin{array}{l}
    \quad c_c = \cout, \quad \hat{\lambda} = \review{\lambda/\cout}, \\
    \hat{\kappa}_{\text{prol}} = \kappaprol / \cout, \quad
    \hat{\kappa}_{\text{death}} = \kappadeath / \cout.
  \end{array} \right\}
\end{align}
Subsequently, we use the characteristic units and nondimensional
parameters, but we drop the hats. The units are set to one such that
$R = 1$, $\muprol = 1$, $\cout = 1$, and $D = 1$.

While the PDE model is intended to closely match the mean-field of the
DLCM model, \review{there exist} certain differences between the two
and \review{we} view the PDE model as an \textit{effective} model of
the DLCM model. For the simulation of the PDE model, we thus use
effective parameters that we derive from the outcome of the DLCM
simulations (details in \S\ref{sec:results}).

\section{Analysis}
\label{sec:analysis}

We analyze the morphological properties of the tumor model in two
spatial dimensions. This simplifies the analysis while \review{still
  allowing for a qualitative comparison with \textit{in vitro} data
  from tumor spheroids embedded in matrigel, where nutrients enter
  through the spherical surface (corresponding to the circular
  boundary in 2D)}. The case of a radially symmetric growth is
discussed in \S\ref{subsec:analysis_radial_symmetry} and a spatial
linear stability analysis in
\S\ref{subsec:linear_stability_analysis}. In essence, the outcome of
the latter analysis include conditions for when the former radially
symmetric case is a valid ansatz. Finally, in
\S\ref{subsec:analysis_stability_relations} we uncover how the
morphological instabilities of the model develop during its different
growth phases and a possible role of the external medium in
exacerbating or reducing such effects.

\subsection{Radial symmetry and the stationary state}
\label{subsec:analysis_radial_symmetry}


A characteristic property of the avascular tumor is that it reaches a
stationary growth phase due to limited oxygen/nutrition availability
\cite{folkman1973self}. We therefore first derive analytical relations
that provide insight into which regions of the model parameter space
map to such a stationary state under the preliminary assumption that
the tumor is radially symmetric.

The constant \review{cellular growth and death rates} define distinct
characteristic regions of the model tumor according to
\eqref{eq:PDE_source_term}: the proliferative, quiescent, and necrotic
region. For a radially symmetric tumor at time $t \ge 0$, motivated by
the form of the oxygen field governed by \eqref{eq:PDE_oxygen_law}, we
let $r_p(t)$ denote the tumor radius, $r_q(t)$ the radius of the
interface between the proliferative and quiescent region, and $r_n(t)$
the radius of the interface between the quiescent and necrotic region,
cf.~\figref{tumor_region_schematic}. Given $\lambda > 0$, the assumed
radial symmetry implies that $0 \le r_n \le r_q \le r_p$, and under
the chosen units, $r_p < R = 1$. We note that if $r_p$ is sufficiently
close to the oxygen source at $R$, the model assumption of avascularity
breaks down.

We simplify the problem by assuming that $\rho = 1$ across the entire
tumor domain since this allows the oxygen field to be explicitly
solved. By incompressibility and slow migration of cells, this is a
reasonable approximation and, besides, a constant cell density in the
PDE model is a close match to the discrete stochastic model that we
wish to investigate. 

We thus solve \eqref{eq:PDE_oxygen_law} under radial symmetry while
imposing $C^1$-continuity for the oxygen $c$ across the interfaces
between the characteristic regions. Under radial symmetry, the
divergence theorem \review{applied to \eqref{eq:PDE_oxygen_law}}
implies an inhomogeneous Neumann boundary condition across each radial
boundary whose value is proportional to the volume of oxygen sinks
contained within. \review{The full problem for the oxygen under radial
  symmetry thus reads
\begin{equation}
  \begin{aligned}
    -\frac{\partial }{r \partial r} \bigg( \frac{r \partial c }{ \partial r}\bigg) = s_c(r) := \begin{cases}
      -\lambda, \quad  &\interface{n} \leq r \leq \interface{p}, \\ 
      0, \quad  &\text{otherwise}
    \end{cases} \\
    c(1) = 1, \quad
    c'(r_i) =
    -\frac{1}{r_i}\int_0^{r_i} s_c(s)s \, ds, \;
    \mbox{for } r_i \in \{\interface{n}, \interface{q}, \interface{p}\},
    \label{eq:PDE_oxygen_law_radialsymmetry}
  \end{aligned}
\end{equation}
where the last three relations follow from application of the
divergence theorem under radial symmetry and at each interface
separately. We solve \eqref{eq:PDE_oxygen_law_radialsymmetry} and
find}
\begin{align}
  \label{eq:PDE_oxygen_radialsymmetry}
  c(r) &= \left\{ \begin{array}{ll}
                    1 + \lambda/2 \, \left( r_p^2- r_n^2 \right) \log r, &
                    r_p \leq r,\\
                    1 + \lambda/2 \, \left( r_p^2 \log r_p
                    -r_n^2 \log r  + (r^2 - r_p^2)/2 \right), & r_n \leq r
                    \leq r_p, \\
                    \kappadeath, & r \leq r_n,
                  \end{array} \right.
\end{align}
where the expression does not differ in the proliferative and
quiescent regions since the oxygen consumption rates are identical there. By
definition, the oxygen level at $r_q$ is $\kappaprol$ and at $r_n$ it
is $\kappadeath$. Using this, \eqref{eq:PDE_oxygen_radialsymmetry}
implies the following algebraic relations between the characteristic
regions
\begin{align}
  \label{eq:PDE_regional_relations}
  \left. \begin{array}{rclcl}
           \Kprol &:=& 4(1-\kappaprol)/\lambda &=&
                                                   -r_p^{2} \log r_p^2+
                                                   r_n^2 \log r_q^2 -
                                                   r_q^2 + r_p^{2} \\
           \Kdeath  &:=& 4(1-\kappadeath)/\lambda &=&
                                                      -r_p^{2} \log r_p^2 +
                                                      r_n^2 \log r_n^2
                                                      -r_n^2 + r_p^{2}
         \end{array} \right\}                                    
\end{align}
in terms of the reduced parameter set $\{\Kprol,\Kdeath\}$.

Given radial symmetry and $\rho = 1$, the tumor volume change is
derived from mass balance as $\dot{V} = \muprol V_p - \mudeath V_n $,
where $V_p$ and $V_n$ are the volumes of the proliferative and
necrotic region, respectively. Thus, in two spatial dimensions we get
that
\begin{eqnarray}
  \label{eq:PDE_regional_dynamics}
  d/dt \, \left(r_p^2 \right) =  -\mudeath r_n^2 - r_q^2 + r_p^{2},
\end{eqnarray}
under the nondimensionalization where $\muprol = 1$. \review{Hence an
  initial state $r_p(t = 0)$ together with the reduced parameter set
  $\{\Kprol,\Kdeath,\mudeath\}$ fully determine the dynamics of a
  radially symmetric tumor under
  \eqref{eq:PDE_regional_relations}--\eqref{eq:PDE_regional_dynamics}.}

Assume now that $(\rneq,\rqeq,\rpeq)$ is a stationary solution of
\eqref{eq:PDE_regional_relations}--\eqref{eq:PDE_regional_dynamics}. \review{Writing
  $\interface{q}^2 = \interface{q}^2(\interface{p}^2)$ and
  $\interface{n}^2 = \interface{n}^2(\interface{p}^2)$, and by
  implicitly differentiating \eqref{eq:PDE_regional_relations}} we can
linearize \eqref{eq:PDE_regional_dynamics} around this solution and
retrieve the single eigenvalue
\begin{align}
  \label{eq:rad_eigenvalue}
  \Lambda_r &= 1 - \frac{\log \rpeq}{\log \rneq} \left(\mudeath +
              \frac{2 \log \frac{\rqeq}{\rneq}}{(\rqeq)^2 -
              (\rneq)^2} \, (\rqeq)^2\right).
\end{align}

\begin{proposition}[\textit{Stability of radially symmetric equilibrium}]
  \label{proposition:rad_stab}
  Given $\mudeath > 0$, assume that $0 \le \rneq \le \rqeq \le \rpeq$
  is a stationary solution of
  \eqref{eq:PDE_regional_relations}--\eqref{eq:PDE_regional_dynamics}. Then
  $\Lambda_r < 0$ in \eqref{eq:rad_eigenvalue} whenever
  $\rpeq \le \exp(-1) \approx 0.368$.
\end{proposition}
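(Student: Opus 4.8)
The plan is to use the stationarity of \eqref{eq:PDE_regional_dynamics} to eliminate $\mudeath$ from \eqref{eq:rad_eigenvalue}, turning the claim $\Lambda_r<0$ into a pure inequality among the equilibrium radii, and then to reduce that inequality to a one-variable calculus estimate in which the threshold $\exp(-1)$ surfaces. Writing $u=(\rpeq)^2$, $w=(\rqeq)^2$, $z=(\rneq)^2$, so that $0\le z\le w\le u\le\exp(-2)$, I would first rewrite \eqref{eq:rad_eigenvalue} in the squared radii as $\Lambda_r=1-\tfrac{\log u}{\log z}\,(\mudeath+F_2)$ with $F_2:=w\log(w/z)/(w-z)$, and then evaluate the stationary condition, $u=w+\mudeath z$, to obtain $\mudeath=(u-w)/z$ for $z>0$. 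Substituting this gives $\Lambda_r=1-\Phi$ with
\[
  \Phi(z,w,u):=\frac{\log u}{\log z}\left(\frac{u-w}{z}+\frac{w\log(w/z)}{w-z}\right),
\]
so the statement becomes $\Phi>1$ on the admissible region. The degenerate endpoint $z=0$ forces $w=u$ (a purely non-proliferating stationary state) and is handled as a limit, giving $\Lambda_r\to1-2\lvert\log\rpeq\rvert\le-1$.

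Next I would show that, for fixed $u,z$, the map $w\mapsto\Phi(z,w,u)$ is strictly decreasing on $[z,u]$; since $\log u/\log z>0$ this amounts to monotonicity of the bracket $G(w)=(u-w)/z+F_2(w)$. A direct differentiation in $t=w/z$ reduces $G'(w)<0$ to the elementary fact that $\psi(t):=(t-1)-\log t-(t-1)^2<0$ for $t>1$, which follows from $\psi(1)=\psi'(1)=0$ together with $\psi''<0$. Hence $\Phi(z,w,u)\ge\Phi(z,u,u)$, with equality only at $w=u$, which is excluded because $\mudeath>0$ forces $w<u$. It therefore suffices to bound the boundary value $\Phi(z,u,u)=\tfrac{\log u}{\log z}\cdot\tfrac{u\log(u/z)}{u-z}$ below by $1$.

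Finally, setting $a=-\log u$ and $d=\log(u/z)\ge0$, the boundary value becomes $a\,d/\bigl[(a+d)(1-e^{-d})\bigr]$, and $\Phi(z,u,u)\ge1$ is equivalent to $H(a,d):=a\,d-(a+d)(1-e^{-d})\ge0$. Since $H=a\,[\,d-1+e^{-d}\,]-d(1-e^{-d})$ with $d-1+e^{-d}\ge0$, the function $H$ is nondecreasing in $a$, so the extremal case is the smallest admissible $a$. This is exactly where the hypothesis enters: $\rpeq\le\exp(-1)$ reads $a=-\log u=-2\log\rpeq\ge2$, and one checks that $H(2,d)=h(d):=d-2+(2+d)e^{-d}$ obeys $h(0)=h'(0)=0$ and $h''(d)=d\,e^{-d}\ge0$, whence $h\ge0$. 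This yields $\Phi\ge1$, strictly once $w<u$, and therefore $\Lambda_r<0$.

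I expect the monotonicity step --- the reduction of the several-variable inequality to the single boundary configuration $\rqeq=\rpeq$ --- to be the crux, since it is what collapses the estimate to one dimension and simultaneously exposes $\exp(-1)$ as the sharp threshold: at $a=2$ and $d\to0^{+}$ the quantity $H$ vanishes, corresponding to the fully collapsed equilibrium $\rneq=\rqeq=\rpeq=\exp(-1)$.
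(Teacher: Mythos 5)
Your proof is correct, and it arrives at the paper's conclusion by a reorganized but closely related route. The paper parametrizes by $\eta=(\rneq/\rqeq)^2$, keeps $\mudeath$ as a free parameter, and asserts ``by inspection'' that the eigenvalue expression is monotonically decreasing in $\mudeath$ and then monotonically increasing in $\rpeq$, bounding it by the limit $\mudeath\to0^+$ and then setting $\rpeq=\exp(-1)$. You instead eliminate $\mudeath=(u-w)/z$ via stationarity at the outset, and establish the analogous reductions as monotonicity in $w=(\rqeq)^2$ (via the explicit derivative inequality $\psi(t)=(t-1)-\log t-(t-1)^2<0$ for $t>1$) and in $a=-2\log\rpeq$ (via $d-1+e^{-d}\ge0$). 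Both arguments collapse onto the identical extremal configuration $\rqeq=\rpeq$ with $\rpeq=\exp(-1)$, and your closing inequality $d\ge(1+d/2)(1-e^{-d})$ is literally the paper's elementary inequality $-y\le(\exp(-y)-1)(1+y/2)$ under $y=d=-\log\eta$. What your organization buys is that the two monotonicity claims --- which in the paper are left to inspection and are not entirely obvious, since the prefactor and the bracket in the paper's form both vary with $\mudeath$ --- become concrete one-variable calculus facts that are straightforward to verify. Your limit treatment of the degenerate case $\rneq=0$, giving $\Lambda_r\to1-2\lvert\log\rpeq\rvert\le-1$, is consistent with the paper's remark that the cases $\eta\in\{0,1\}$ are handled as limits.
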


\begin{proof}
  Put $(\rneq)^2 = \eta (\rqeq)^2$ for some $\eta \in (0,1)$ and note
  that $(\rpeq)^2 = (1+\mudeath \eta) (\rqeq)^2$ by stationarity (the
  cases $\eta \in \{0,1\}$ are treated as limits). The eigenvalue
  becomes
  \begin{align*}
    \Lambda_r &= 1 - \frac{\log \rpeq}{\log \rpeq+\frac{1}{2} \log
                \left( \eta/(1+\mudeath \eta) \right)} \left(\mudeath -
                \frac{\log \eta}{1-\eta} \right).
  \end{align*}
  By inspection we find that as a function of $\mudeath$, the
  expression on the right is monotonically decreasing and hence it is
  bounded by its behavior as $\mudeath \to 0+$:
  \begin{align*}
    \Lambda_r &< 1 + \frac{\log \rpeq}{\log \rpeq+\frac{1}{2} \log \eta} 
                \times \frac{\log \eta}{1-\eta}.
  \end{align*}
  In turn, as a function of $\rpeq$, this expression is monotonically
  increasing such that, in particular, for $\rpeq \le \exp(-1)$ we
  have that
  \begin{align*}
    \Lambda_r &< 1 + \frac{\log \eta}{1-\frac{1}{2} \log \eta} 
                \times \frac{1}{1-\eta}.
  \end{align*}
  From the elementary inequality $-y \le (\exp(-y)-1) \cdot (1+y/2)$
  for $y \ge 0$ we conclude, taking $y = -\log \eta$, that
  $\Lambda_r < 0$. \review{The same inequality applies also to the
    limits $\eta \rightarrow \{0^+,1^-\}$.}
\end{proof}

Proposition~\ref{proposition:rad_stab} \review{provides a sufficient
  condition for stability based only on the size of the tumor and} can
be thought of as a modeling cut-off: either a radially symmetric tumor
is small enough to be stable, or it has grown too large relative to
the oxygen source for stability to be guaranteed \review{for all
  $\eta$. While there might exist values of $\eta$ for which a larger
  tumor is stable, the size alone for $\rpeq > \exp(-1)$ is not
  sufficient to deduce that the tumor is stable, but the converse is
  true for smaller tumors.} In the numerical experiments in
\S\ref{sec:results}, we use Proposition~\ref{proposition:rad_stab} to
ensure that the tumor is small enough that a radially symmetric
solution is expected to reach a stable stationary state. For suggested
stationary radii $(\rneq,\rqeq,\rpeq)$, with $\rpeq$ small enough,
$\mudeath$ is defined by setting \eqref{eq:PDE_regional_dynamics} to
0, and similarly $\Kprol$ and $\Kdeath$ are found from
\eqref{eq:PDE_regional_relations}\review{, which are used to determine
  the stability of the stationary state and the full dynamics of the
  tumor's growth rate.}

\subsection{Morphological stability}
\label{subsec:linear_stability_analysis}

We analyze the stability of the PDE model by studying the system's
response to perturbations of a radially symmetric solution.  The main
result depends on the three Lemmas in
\S\ref{apx:propositions_stability_analysis} and reads as follows:
\begin{theorem}[\textit{Linear stability}]
  Let the outer tumor boundary $\interface{p}$ be perturbed by
  \review{
  \begin{equation}
    \rpert{p}(\theta) = \interface{p} + \epsilon \rcoef{p}\cos(k\theta),
    \label{eq:front_perturbation}
  \end{equation}
  }
  for some $\lvert \epsilon \rvert \ll 1$. Write the induced inner
  perturbations on the same form,
  \review{
  \begin{align}
    \label{eq:inner_regions_perturbations}
    \begin{split}
      \rpert{q}(\theta) &= \interface{q} + \epsilon \rcoef{q}
      \cos(k\theta),
      \\
      \rpert{n}(\theta) &= \interface{n} + \epsilon \rcoef{n}
      \cos(k\theta),
    \end{split}
  \end{align}
} each defined as the interface between the regions of different
\review{cellular} growth rates according to \eqref{eq:PDE_source_term}
with the oxygen field governed by \eqref{eq:PDE_oxygen_law} (see also
\figref{tumor_region_schematic}). We let the pressure field and the
cell density advection be defined as in
\S\ref{subsec:PDE_model}. \review{Then to first order in $\epsilon$,
  the $k$th perturbation mode in \eqref{eq:front_perturbation} grows
  as $\rcoef{p}(t) \propto e^{\Lambda(k) t}$ according to the
  dispersion relation}
  \begin{align}
    \label{eq:dispersion_relation_explicit}
    \Lambda(k) &= \frac{\interface{p}'}{\interface{p}}\Biggl(
    \overbrace{\frac{1 - \Dout}
                 {1+\Dout}k }^{\text{Saffman-Taylor}} - 1 \Biggr) \\
    \nonumber
    + &\frac{\Dout}{1+\Dout} \Biggl(1 - \underbrace{\interface{p}^{-k
    -1}\left(\mudeath
        \interface{n}^{k+1}\frac{\rcoef{n}}{\rcoef{p}} +
        \interface{q}^{k+1}\frac{\rcoef{q}}{\rcoef{p}}\right)}_{\text{inner region perturbation}} -
        \underbrace{\sigma
        \frac{k(k^2-1)}{\interface{p}^{3}}}_{\text{surface tension}}\Biggr),
  \end{align}
  in which the interface perturbation coefficients
  $\rcoef{q}$, $\rcoef{n}$ are given by
  \eqref{eq:regional_perturbation_coefficients} and where the radial
  growth follows from \eqref{eq:PDE_regional_dynamics},
  \begin{align}
    \label{eq:PDE_front_velocity}
    \interface{p}' &= -\frac{1}{2\interface{p}}(\mudeath \interface{n}^2
    + \interface{q}^2 - \interface{p}^{2}) = -p{'}(\interface{p}),
  \end{align}
  by Darcy's law.
\end{theorem}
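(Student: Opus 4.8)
The plan is to linearize the full free-boundary problem about the radially symmetric base state and collapse the dynamics to a scalar evolution equation for the outer-mode amplitude $\rcoef{p}(t)$. I would write each field as base-plus-perturbation carrying the single harmonic $\cos(k\theta)$: oxygen $c = c_0(r) + \epsilon c_1(r)\cos(k\theta)$, interior pressure $p = p_0(r) + \epsilon p_1(r)\cos(k\theta)$ (piecewise over the three regions), and exterior pressure $\pext = p_0^{(\mathrm{ext})}(r) + \epsilon p_1^{(\mathrm{ext})}(r)\cos(k\theta)$. The base fields are already fixed: $c_0$ by \eqref{eq:PDE_oxygen_radialsymmetry}, and $p_0$ by integrating $-\Delta p_0 = \Gamma/D$ region-by-region with continuity of $p_0,p_0'$ across $\interface{n},\interface{q}$ and \eqref{eq:BC_Young-Laplace} at $\interface{p}$. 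The $\Ordo{1}$ part of the kinematic law will reproduce $\interface{p}' = -p'(\interface{p})$ in \eqref{eq:PDE_front_velocity}, while the $\Ordo{\epsilon}\cos(k\theta)$ part of that same law yields the coefficient in $\rcoef{p} \propto e^{\Lambda(k)t}$.

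Second, I would pin down the induced inner amplitudes $\rcoef{q},\rcoef{n}$, which is exactly the content deferred to the three Lemmas. Since $\interface{q},\interface{n}$ are level sets of the oxygen at $\kappaprol,\kappadeath$, I would solve the oxygen equation \eqref{eq:PDE_oxygen_law} linearized about the radial solution (with region boundaries displaced by $\epsilon\rcoef{q}\cos(k\theta)$ and $\epsilon\rcoef{n}\cos(k\theta)$), obtaining $c_1$ as a combination of $r^{k}$ and $r^{-k}$ harmonics per annulus matched by $C^1$-continuity. Imposing $c(\rpert{q}) = \kappaprol$ and $c(\rpert{n}) = \kappadeath$ to first order then determines $\rcoef{q},\rcoef{n}$ as linear multiples of $\rcoef{p}$, i.e.\ the ratios $\rcoef{q}/\rcoef{p}$ and $\rcoef{n}/\rcoef{p}$ of \eqref{eq:regional_perturbation_coefficients} entering the inner-region term of \eqref{eq:dispersion_relation_explicit}.

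Third comes the pressure problem. Because $\Gamma/D$ is piecewise constant and pressure-independent, $p_1$ is harmonic in the bulk of each region, so its radial part is $p_1(r) = a_j r^{k} + b_j r^{-k}$, with $b_j=0$ in the necrotic core for regularity and only the decaying $r^{-k}$ mode surviving in the exterior. Writing $[f]$ for the jump of $f$ across an interface, I would propagate the coefficients outward through the conditions at $\interface{n},\interface{q}$: continuity of $p$ and of the Darcy flux $D\,\partial_r p$ at the \emph{displaced} interface, Taylor-expanded to first order. Continuity of $p$ reduces to continuity of $p_1$ (since $p_0'$ is continuous), whereas the flux condition acquires a jump proportional to the interface displacement times $[\,p_0''\,]$; and since $-\Delta p_0 = \Gamma/D$ converts $[\,p_0''\,]$ into $-[\Gamma]$, this is precisely the mechanism injecting the factors $\mudeath\interface{n}^{k+1}$ (from $[\Gamma]=\mudeath$ at $\interface{n}$) and $\interface{q}^{k+1}$ (from $[\Gamma]=\muprol=1$ at $\interface{q}$). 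At the outer front I would impose the two conditions of \S\ref{subsec:PDE_model}: Young--Laplace \eqref{eq:BC_dirichlet_general} with the perturbed curvature $C = \interface{p}^{-1} + \epsilon\rcoef{p}(k^2-1)\interface{p}^{-2}\cos(k\theta) + \Ordo{\epsilon^2}$ (origin of the surface-tension term), and the compatibility \eqref{eq:BC_compatability} linking $D\,\partial_r p$ to $\Dout\,\partial_r \pext$. Eliminating the single exterior coefficient between these two relations produces the Saffman--Taylor factor $(1-\Dout)/(1+\Dout)$ together with the overall prefactor $\Dout/(1+\Dout)$.

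Finally, I would assemble the dispersion relation from the kinematic condition \eqref{eq:PDE_compatability_condition}. Writing $\rpert{p}(t,\theta) = \interface{p}(t) + \epsilon\rcoef{p}(t)\cos(k\theta)$ and equating $\partial_t \rpert{p}$ with the normal velocity $-D\,\partial_r p$ evaluated at $\rpert{p}$ (the $\Ordo{\epsilon}$ tilt of the normal multiplies the already-$\Ordo{\epsilon}$ tangential gradient and is thus negligible), the $\Ordo{1}$ balance recovers \eqref{eq:PDE_front_velocity} and the $\cos(k\theta)$ balance delivers $\Lambda(k)$ after substituting the coefficients found above. The main obstacle is the bookkeeping across the three moving internal interfaces: one must Taylor-expand every matching condition to the displaced positions, convert the $p_0''$ jumps into source jumps consistently, and keep the $r^{k}$ versus $r^{-k}$ amplitudes straight through the region-to-region propagation — this is where the inner-region term is built and where sign or power errors are most likely. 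By comparison, the curvature expansion and the exterior Saffman--Taylor elimination are routine once the interior matching is correctly organized.
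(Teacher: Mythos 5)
Your proposal is correct and follows essentially the same route as the paper: a single-harmonic base-plus-perturbation ansatz, harmonic $r^{\pm k}$ radial parts for the pressure perturbation, Taylor-expanded matching conditions at the displaced interfaces (with the $[p_0'']$ jumps tied to the jumps in $\Gamma$, which the paper packages into Lemmas~\ref{lemma:continuity}--\ref{lemma:continuous_derivative}), the oxygen level-set conditions for $\rcoef{q},\rcoef{n}$ (the paper's Lemma~\ref{lemma:oxygen_perturbation_resonse}), the perturbed-curvature and exterior-elimination steps at the outer front, and the kinematic condition \eqref{eq:PDE_compatability_condition} yielding $\Lambda(k) = -\bigl(p''(\interface{p}) + \pcoef{p}{'}(\interface{p})/\rcoef{p}\bigr)$. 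The only difference is organizational: you carry out the interface expansions inline where the paper delegates them to the three appendix lemmas.
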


\begin{proof}
  We first solve for the pressure field \eqref{eq:PDE_pressure_law}
  under the assumption of radial symmetry where, again, the divergence
  theorem implies Neumann interface conditions. The result is
  \begin{align}
    \label{eq:PDE_pressure_outerregion}
    \pext(r) &= (\mudeath \interface{n}^2 +
               \interface{q}^2 - \interface{p}^2) \frac{1}{2\Dout} \log(r),
               &\quad \interface{p} \leq r&, \\
    \label{eq:PDE_pressure_proliferatingregion}
    \subquant{p}{p}(r) &= (\mudeath \interface{n}^2 + \interface{q}^2)
                         \frac{1}{2} \log \frac{r}{\interface{p}} - \frac{1}{4}(r^2 -
                         \interface{p}^{2}) + \pext(\interface{p}) - \sigma C, &\quad
                         \interface{q} \leq r& \leq \interface{p}, \\
    \nonumber
    \subquant{p}{q}(r) &= \subquant{p}{p}(\interface{q}) +
                         \frac{1}{2}\interface{n}^2\log\frac{r}{\interface{q}}, &\quad \interface{n}
                         \leq r& \leq \interface{q} , \\
    \label{eq:PDE_pressure_necroticregion}
    \subquant{p}{n}(r) &= \subquant{p}{q}(\interface{n}) +
                         \frac{\mudeath}{4}(r^2 - \interface{n}^2), &\quad 0 \leq r& \leq
                         \interface{n},
  \end{align}
  where the negative pressure gradient at $\interface{p}$
  \eqref{eq:PDE_pressure_proliferatingregion} recovers the velocity of
  the tumor's outer rim \eqref{eq:PDE_regional_dynamics} as expected.
  
  \review{We continue by perturbing the tumor boundary according to
  \eqref{eq:front_perturbation} and, assuming independence between modes for the
  induced perturbations, we have \eqref{eq:inner_regions_perturbations} and
  let the pressure perturbation in each region $i \in 
  \{\text{ext},p,q,n\}$ be
  \begin{align*}
    \ppert{i}(r) &=
                   \subquant{p}{i}(r) + \epsilon \pcoef{i}(r) \cos (k
                   \theta).
  \end{align*}
  }
  Using similar arguments to those in the proof of
  Lemma~\ref{lemma:oxygen_perturbation_resonse}, we can show that the
  pressure perturbation coefficients are of the form
  \eqref{eq:oxygen_perturbation_form}. We next use
  Lemmas~\ref{lemma:continuity} and \ref{lemma:continuous_derivative}
  to find the continuity relations for the coefficients. However, the
  pressure discontinuity at the tumor boundary,
  \eqref{eq:BC_dirichlet_general} and \eqref{eq:BC_compatability},
  must be treated separately. For the former, the first order
  approximation in $\epsilon$ of $\sigma C(\interface{p}^*)$ is
  evaluated, and for the latter we use that $D$ and $\Dout$ are
  constant within their respective domains. Thus, the continuity
  relations become
  \begin{align}
  \label{eq:perturbation_continuity_relations}
    \begin{split}
      \pcoef{p}(\interface{p}) &= \pextcoef(\interface{p}) +
      (\pext{'}(\interface{p}) - p{'}(\interface{p})
      + \sigma \frac{k^2 - 1}{\interface{p}^{2}}) \rcoef{p}, \\
      \pcoef{p}(\interface{q}) &= \pcoef{q}(\interface{q}), \\
      \pcoef{q}(\interface{n}) &= \pcoef{n}(\interface{n}),
  \end{split}
  \end{align}
  and for the derivatives,
  \begin{align}
  \label{eq:perturbation_derivative-continuity_relations}
    \begin{split} \pcoef{p}{'}(\interface{p}) &= -
      p{''}(\interface{p})\rcoef{p} +
      \Dout(\pextcoef{'}(\interface{p}) +
      \pext{''}(\interface{p})\rcoef{p}), \\ \pcoef{p}{'}(\interface{q}) &=
      \pcoef{q}{'}(\interface{q}) +
      \rcoef{q}(\subquant{p}{p}{''}(\interface{q}) -
      \subquant{p}{q}{''}(\interface{q})), \\ \pcoef{q}{'}(\interface{n}) &=
      \pcoef{n}{'}(\interface{n}) +
      \rcoef{n}(\subquant{p}{q}{''}(\interface{n}) -
      \subquant{p}{n}{''}(\interface{n})). \\
    \end{split}
  \end{align}
  As in
  \cite{giverso2016emerging}, we find the form of this dispersion
  relation from the velocity at the tumor boundary by applying
  \review{\eqref{eq:PDE_compatability_condition} to the
  perturbed solution. Considering only the first order terms in
  $\epsilon$, we find that $\partial\rcoef{p} / \partial t = \Lambda(k) \rcoef{p}$, with}
  \begin{equation}
    \Lambda(k) = -\left(p{''}(\interface{p}) +
      \frac{\pcoef{p}{'}(\interface{p})}{\rcoef{p}}\right).
    \label{eq:dispersion_relation}
  \end{equation}
  Finally, evaluating \eqref{eq:dispersion_relation} using
  Lemma~\ref{lemma:oxygen_perturbation_resonse} for the coefficients
  $\rcoef{q}$, $\rcoef{n}$ yields the dispersion relation
  \eqref{eq:dispersion_relation_explicit}.
\end{proof}

Note that
\eqref{eq:dispersion_relation_explicit} is independent of the
coefficients $\rcoef{p}$ of the initiating perturbation since both
$\rcoef{n}$ and $\rcoef{q}$ are proportional to $\rcoef{p}$, as seen in
\eqref{eq:regional_perturbation_coefficients}. It follows that $\Lambda(k)$
is unambiguously determined by the set of parameters and values
$\{\interface{p}, \interface{q}, \interface{n}, \mudeath, \Dout,
\sigma\}$.

The first part in \eqref{eq:dispersion_relation_explicit} is the
Saffman-Taylor instability \cite{saffman1958penetration} term. This
type of instability has previously been discussed in the context of
growing cell populations \cite{mather2010streaming}. Further, the
induced perturbations on the oxygen field act to dampen any
morphological instability as seen by the inner region perturbation
term, which is negative and can only decrease the value of
$\Lambda(k)$. However, for large $k$ this dampening vanishes in
general as is seen from the following reasoning. Let
$\interface{n}^2 = \theta_n \interface{p}^2$,
$\interface{q}^2 = \theta_q \interface{p}^2$, with
$0 \leq \theta_n < \theta_q < 1$, i.e., the regions do not
overlap. Then the inner region perturbation term becomes
\begin{equation}
    \label{eq:oxygen_perturbation_simplified}
    \frac{1 - \interface{p}^{2k}}{1 - \theta_n^k
    \interface{p}^{2k}} \left( \mudeath \theta_n^{k} + \frac{\theta_q^{k} -
    \theta_n^{k}}{\theta_q - \theta_n} \times \frac{1}{k} \times
    \theta_q \right),
\end{equation}
which for $\interface{p} < 1$ tends to zero as $k$ grows.  Finally,
surface tension also reduces the amplitude and range of unstable
perturbation modes. Similar effects are observed due to cell adhesion
in glioblastoma models \textit{in silico} and \textit{in vitro}
\cite{oraiopoulou2018integrating}.

\subsection{Notable special cases}
\label{subsec:analysis_stability_relations}

The dispersion relation \eqref{eq:dispersion_relation_explicit}
provides rich insight into the morphological dynamics of our
model of a growing avascular tumor. We outline notable regimes of
these dynamics below.

\paragraph{The Saffman-Taylor instability}

When the tumor grows in a medium that flows on a significantly smaller
timescale than the \review{migration rate of the tumor cells}, corresponding to $\Dout \gg D$
($\equiv 1$ by nondimensionalization), the dispersion relation becomes
\begin{align}
  \label{eq:dispersion_relation_largeDout}
  \Lambda(k) &= -\frac{\interface{p}'}{r_p}( k + 1 ) \\
  \nonumber
  &\phantom{=}+1 - r_p^{-k-1}\left(\mudeath r_n^{k+1}\frac{\rcoef{n}}{\rcoef{p}} +
    r_q^{k+1}\frac{\rcoef{q}}{\rcoef{p}}\right) - \sigma \frac{k(k^2-1)}{r_p^{3}}.
\end{align}
\review{The same result is obtained by assuming a homogeneous outer
  pressure, $\pext(r) = p_0$, for some constant $p_0$. Following the
  same arguments we get again
  \eqref{eq:perturbation_continuity_relations} but with $\pextcoef$
  and $\pext{'}$ equal to zero, and that
  \eqref{eq:perturbation_derivative-continuity_relations} holds except
  for the first relation, since $C^1$-continuity is no longer valid on
  $\partial \Omega$ making Lemma~\ref{lemma:continuous_derivative}
  inapplicable there. The dispersion relation is still given by
  \eqref{eq:dispersion_relation} and
  Lemma~\ref{lemma:oxygen_perturbation_resonse} also holds (the oxygen
  field does not explicitly depend on the outer pressure), which
  combined yields \eqref{eq:dispersion_relation_largeDout}.} The
Saffman-Taylor term (the first term) is here at its most stabilizing
since $k(1 - \Dout)/(1 + \Dout) \geq -k$ for $\Dout \geq 0$. On the
other side of the spectrum, we have the situation when the external
tissue is significantly more viscous and practically immovable within
the temporal scale of the growing tumor. This corresponds to the
condition $\Dout \ll 1$, and \eqref{eq:dispersion_relation_explicit}
becomes
\begin{eqnarray}
    \Lambda(k) &=& \frac{\interface{p}'}{r_p}( k - 1 ),
    \label{eq:dispersion_relation_largeD}
\end{eqnarray}
and every mode $k \geq 2$ is unstable during growth with no stabilizing effect from the surface
tension. The case $\Dout < 1$ is the common form of the Saffman-Taylor
instability.


\paragraph{Growth phases}

The tumor's morphological stability depends on which growth phase the
tumor is in. We identify the following quantity as a discriminant of
the tumor's growth phase:
\begin{equation}
    \Delta_{\theta} = \frac{\interface{p}'}{\interface{p}},
    \label{eq:angular_discriminant}
\end{equation}
i.e., the relative tumor boundary velocity. When the tumor is
initially growing in a nutrient-rich environment \review{and $r_n$,
  $r_q$ are small, then from \eqref{eq:PDE_front_velocity} we see that
  $\interface{p}' \approx \interface{p}/2$ which implies that}
$\Delta_{\theta} \approx \frac{1}{2}$ and hence
\eqref{eq:dispersion_relation_explicit} becomes
\begin{equation}
    \Lambda(k) = \frac{1-\Dout}{2(1 + \Dout)}(k-1) - \frac{\Dout}{1+\Dout} \sigma
    \frac{k(k^2-1)}{r_p^3}.
\end{equation}
Hence as the tumor grows exponentially, we expect all modes $k > 1$ to
be stable for $\Dout > 1$ and unstable for $\Dout < 1$ in the absence
of surface tension effects.

As the tumor grows larger and the nutrient availability can no longer
sustain the entire tumor, the tumor front slows down and
$\Delta_{\theta} \rightarrow 0^+$, and the effect from the
Saffman-Taylor part diminishes. From
\eqref{eq:oxygen_perturbation_simplified} we see that the inner region
perturbation term in \eqref{eq:dispersion_relation_explicit} is
negative, and hence close to the tumor's stationary state we have that
\begin{align}
    \label{eq:stationary_state_instability}
  \Lambda(k) < 
  \frac{\Dout}{1+\Dout} \Bigl(1 -
      \sigma
      \frac{k(k^2-1)}{\interface{p}^{3}}\Bigr),
\end{align}
Since the inner region perturbation term tends to zero for increasing
$k$, the upper bound becomes a good approximation of $\Lambda(k)$ for
large $k$.  Clearly, \eqref{eq:stationary_state_instability} shows
that a positive value of $\sigma$ is necessary for the stationary
stability.

\paragraph{Surface tension and stationarity}

The stability relation provides an estimate of the surface tension
required to maintain a radially symmetric growth as
$t \rightarrow \infty$. Considering only the case when $\Dout \gg 1$,
we see from \eqref{eq:dispersion_relation_largeDout} that the least
stable case with respect to the discriminant is obtained when
$\Delta_{\theta} = 0$. We find the necessary surface tension by
requiring $\Lambda(k) = 0$ for all $k$. Again, using that the inner
region perturbation term is negative, we obtain from
\eqref{eq:dispersion_relation_largeDout} the bound
\begin{equation}
  \sigma_{\text{stable},k} \leq \frac{r_p^{3}}{k(k^2 - 1)}
  \label{eq:surface_tension_stable_k}
\end{equation}
where $\sigma_{\text{stable},k}$ is the lower bound of $\sigma$
required for stabilizing small perturbations of mode $k$. Thus, to
stabilize all modes $k \geq 2$ it is \textit{sufficient} to have that
$\sigma = r_p^{3}/6$, depending only on the total tumor volume. Again,
since the inner perturbation term tends to zero as $k$ grows, the
upper bound is a good approximation of $\sigma_{\text{stable},k}$ for
large $k$.

\paragraph{Creeping instability}

We finally remark on the interesting mode $k=1$, the only mode
unaffected by surface tension. Geometrically, this mode corresponds to
movement of the tumor's center of mass: the tumor begins to
\textit{creep} towards the oxygen source given a small
perturbation. From
\eqref{eq:dispersion_relation_explicit},
\begin{equation}
  \label{eq:creeping_instability}
  \Lambda(k=1) = \frac{\Dout}{1 + \Dout}\frac{\mudeath r_n^2 + r_q^2}{r_p^2}
  \times \frac{r_p^2 - r_n^2}{1-r_n^2}  \geq 0.
\end{equation}
Thus, the tendency for creeping always exists when $\interface{q}$
and/or $\interface{n}$ are positive, and the model would require
additional features regarding, e.g., the external tissue's response to
invasion, in order to inhibit this effect. Note that this tendency is
reduced for tumors growing within an environment more viscous and/or
less permeable than its own. As showed previously, however, such
conditions make all the other modes $k\geq2$ \textit{less} stable.

\section{Numerical examples}
\label{sec:results}

In the following section, we present numerical simulations of both the
stochastic model from \S\ref{subsec:DLCM_model} and the PDE model from
\S\ref{subsec:PDE_model}. We focus on the case $\Dout \gg 1$ which we
showed had an inherent stabilizing effect during growth in
\S\ref{subsec:analysis_stability_relations}.  In
\S\ref{subsec:results_radial_symmetry} we assess the validity of the
assumption that growth is radially symmetric and how the stability
responds to surface tension. In \S\ref{subsec:results_stochasticity},
we explore the relation between surface tension and the emergent
morphology and compare the outcomes between the stochastic and the
mean-field PDE model.

Due to certain differences between the models, we use effective
parameter values for the PDE simulations for the parameters $\muprol$,
$\mudeath$ and $\lambda$, and denote those with a bar, e.g.,
$\muprolbar$. The effective parameters are derived from the stochastic
model simulations via basic scaling considerations or preliminary
simulations \review{as detailed in \S\ref{apx:numerical_methods}}.

The set of parameters used in both the DLCM and the PDE simulations
are found in \tabref{PDE_parameters_numerical}.

\review{
\begin{SCtable}
  \centering
  \begin{tabular}{p{3cm}p{2cm}p{2cm}}
    \hline
    Parameter & DLCM & PDE \\
    \hline
    $\mudeath$ & 0.5 & 1.35 \\
    $\mudeg$ & 0.05 & N/A \\
    $\kappaprol$ & 0.94 & 0.94 \\
    $\kappadeath$ & 0.93 & 0.93   \\
    $\lambda$ & 1 & 1.15\\  
                         $\pext$ & 0 & 0 \\
                        $\Dout$ & $+ \infty$ & $+ \infty$ \\
   $D_2$ & 25 & N/A \\ 
    \hline
  \end{tabular}
  \caption{\review{Standard set of parameters of the DLCM model and
      the corresponding effective PDE parameters.}}
  \label{tab:PDE_parameters_numerical}
\end{SCtable}
}

\subsection{Radial symmetry and surface tension}
\label{subsec:results_radial_symmetry}

We first solve the PDE under the assumption of radial symmetry. We
solve the reduced 1D problem comprising
\eqref{eq:PDE_regional_relations} and \eqref{eq:PDE_regional_dynamics}
as derived in \S\ref{subsec:analysis_radial_symmetry} and evaluate the
perturbation growth \review{rates} \eqref{eq:dispersion_relation_largeDout}
during tumor growth and study their response to surface tension.  For
quantitative measurements of the regional characteristics during tumor
growth, we consider the volumetric quantities $V_p = \pi r_p^2$,
$V_q = \pi r_q^2$, and $V_n = \pi r_n^2$.

\begin{figure}[ht]
  \centering
  \begin{subfigure}{.5\linewidth}
  \centering
    \includegraphics[scale=1]{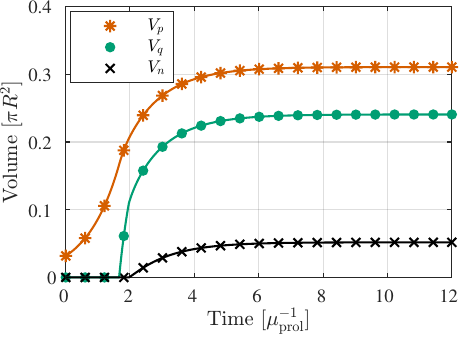}
    \caption{\label{fig:PDE_solution_example_a}}
  \end{subfigure}
  \begin{subfigure}{.47\linewidth}
  \centering
    \includegraphics[scale=1]{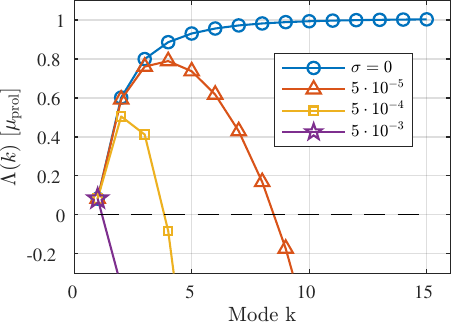}
    \caption{\label{fig:PDE_solution_example_b}}
  \end{subfigure}
  \caption{Regional characteristics and perturbation growth \review{rates} under
  the assumption of radial symmetry. \figref{PDE_solution_example_a}
  shows the evolution of the three characteristic regions.
  \figref{PDE_solution_example_b} shows the
  perturbation growth \review{rates} \eqref{eq:dispersion_relation_largeDout}
  for the first few modes at the tumor's stationary state and at varying
  surface tension $\sigma$.}
  \label{fig:PDE_solution_example}
\end{figure}
  
\figref{PDE_solution_example} shows the evolution of the regional
characteristics for a simulation using the standard parameters for the
PDE found in \tabref{PDE_parameters_numerical}, accompanied by the
perturbation growth \review{rates} \eqref{eq:dispersion_relation_largeDout}
at the stationary state. In \figref{PDE_solution_example_a} we observe
the emergence of the characteristic sigmoidal growth of the total volume, with an
initially exponential growth followed by a growth rate that plateaus.
\figref{PDE_solution_example_b} shows the perturbation growth \review{rates}
versus mode close to the stationary state for a range of $\sigma$.  It is clear
that the assumption of radial symmetry for low values of $\sigma$ does not hold
when oxygen is not sufficient to sustain the growth of the entire
population. From \eqref{eq:dispersion_relation_largeDout} we find that
$\sigma_{\text{stable},2} \approx 3.1\cdot 10^{-3}$, i.e., this is the
value needed to stabilize modes $k \geq 2$. From
\figref{PDE_solution_example_b} we see that $\sigma$ lower than this
prompt nontrivial spatial behavior with instabilities that may occur
over different timescales (investigated further in
\S\ref{subsec:results_stochasticity}).

\begin{SCfigure}
  \begin{subfigure}[b]{.175\linewidth}
    \includegraphics[scale=1]{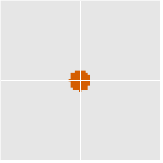}
    \caption{\label{fig:PDE_creeping_example_a}}
  \end{subfigure}
  \begin{subfigure}[b]{.175\linewidth}
    \includegraphics[scale=1]{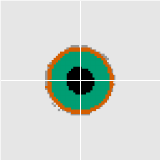}
    \caption{\label{fig:PDE_creeping_example_b}}
  \end{subfigure}
  \begin{subfigure}[b]{.175\linewidth}
    \includegraphics[scale=1]{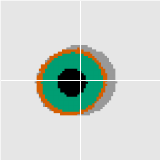}
    \caption{\label{fig:PDE_creeping_example_c}}
  \end{subfigure}
  \caption{\review{Solution using a surface tension large enough to ensure
    stability and hence radial symmetry, $\sigma = 3.2\cdot 10^{-3}$.
    Times $t=0$, $t=10$, $t=80$, respectively, from left to
    right. \textit{Red:} proliferative cells, \textit{green:}
    quiescent cells, \textit{black:} necrotic cells, \textit{dark
      grey:} previously occupied. The white cross-hairs are centered
    on the domain origin.}}
  \label{fig:PDE_creeping_example}
\end{SCfigure}

As suggested by \eqref{eq:creeping_instability}, creeping is expected
for long enough times. We test this \review{by simulating the model using
$\sigma = 3.2\cdot 10^{-3}$ to ensure that modes $k \geq 2$ are stabilized
(see details about the numerical methods and the implementation of
surface tension in \S\ref{apx:numerical_methods}). The results are shown
in \figref{PDE_creeping_example} where we see that the} tumor
reaches close to its stationary state at around \review{$t = 10$}, in
accordance with the solution to the 1D equations in
\figref{PDE_solution_example}. We observe a notable collective migration from
the domain origin starting at \review{$t = 80$}.

\subsection{The emergent morphology and its response to stochasticity}
\label{subsec:results_stochasticity}

\review{We begin by a brief investigation into how well the dispersion
  relation \eqref{eq:dispersion_relation_largeDout} describes the
  morphological stability of the DLCM model tumor. To this end, we
  conduct simulations of the complete DLCM model, initialized close to
  the estimated equilibrium volumes (see
  \figref{PDE_solution_example_a}) using the standard parameters and
  setting $\sigma = 10^{-4}$. We impose an initial perturbation to the
  tumor geometry of the form \eqref{eq:front_perturbation} with
  $\epsilon = 0.05$ and for modes $k = 1, ..., 8$. We regard the mean
  of \eqref{eq:dispersion_relation_largeDout} during a selected time
  interval as an analytical prediction and measure the growth of each
  mode by fitting the amplitude to an exponential growth
  law. \figref{DLCM_Lambda_comparison} indicates that the rates agree
  fairly well, thus supporting the use of the PDE-based stability
  analysis in predicting the behavior also of the DLCM model.}
\begin{SCfigure}
    \includegraphics[scale=1]{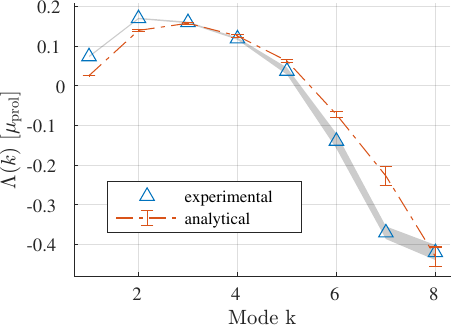}
    \caption{\review{Comparison between analytical
        \eqref{eq:dispersion_relation_largeDout} and experimental
        perturbation growth rates for the first few modes when
        $\sigma = 10^{-4}$. The error bars show the standard deviation
        of the analytical values during the time interval of
        measurement, and the shaded region corresponds to the standard
        deviation of the growth rate estimate during the same interval
        (20 independent runs).}}
  \label{fig:DLCM_Lambda_comparison}
\end{SCfigure}

\review{Motivated by the quantitative evidence for a correspondence
  between the PDE analysis and the DLCM model, we} compare the
morphology and the growth patterns of the stochastic model and the PDE
model for similar parameters. \review{To avoid perturbations that are
  biased by the discretization method we add small amounts of white
  noise to the cell density updates. These and further details of the
  PDE solver, including the implementation of surface tension, are
  discussed in \S\ref{apx:numerical_methods}}. We evaluate the tumor
  boundary \textit{roundness} defined over a 2D region as
\begin{equation}
    \label{eq:roundness}
    \text{Roundness} = \frac{4\pi \times \text{Area}}{\text{Perimeter}^2},
\end{equation}
which ranges from $0$ to $1$, where $1$ means the shape is perfectly
circular and smaller values measure its deviation from circularity.

We first investigate numerically the effects of surface tension on the
tumor morphology. Specifically, we study the onset of
second mode instability according to
\eqref{eq:dispersion_relation_largeDout} for a tumor growing in two
spatial dimensions. For this purpose, we compare the growth using
$\sigma = 2 \cdot 10^{-3}$ and $\sigma = 5 \cdot 10^{-4}$ until
$t = 30$, during which the former value is stable for $k=2$ although
it is not stable over larger times $t$. \review{For both experiments,
  we compare morphology and growth using a single simulation per model
  (a discussion on the impact of stochasticity is offered in
  \S\ref{sec:discussion}).}

\begin{figure}[!ht]
  \centering
  \begin{subfigure}[b]{.45\linewidth}
  \centering
    \includegraphics[scale=1]{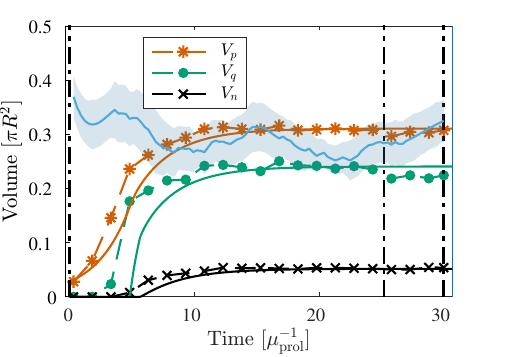}
    \caption{\label{fig:regional_evolution1_DLCM}}
  \end{subfigure}\qquad
  \begin{subfigure}[b]{.45\linewidth}
  \centering
    \includegraphics[scale=1]{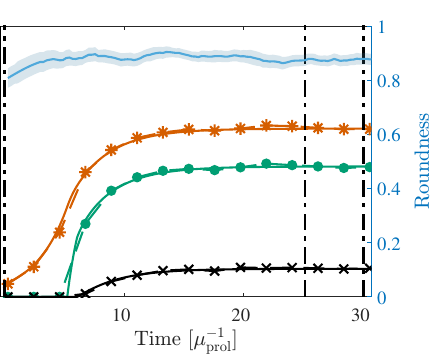}
    \caption{\label{fig:regional_evolution1_PDE}}
  \end{subfigure}
  \caption{\review{Evolution of characteristic volumes for DLCM (a)
      and PDE (b), respectively, using the standard parameters in
      \tabref{PDE_parameters_numerical} and
      $\sigma = 2 \cdot 10^{-3}$. The 2D solutions (dashed) are
      compared with the 1D solution
      \eqref{eq:PDE_regional_relations}--\eqref{eq:PDE_regional_dynamics}
      (solid) for the same parameters. Blue shows the roundness
      \eqref{eq:roundness} of the tumor boundary over time with moving
      window standard deviation in shaded. The vertical lines indicate
      the times $t\in [0,25,30]$ of the solutions shown in
      \figref{spatial_evolution1}.}}
  \label{fig:regional_evolution1}
\end{figure}

\begin{figure}
  \centering
  \begin{subfigure}[b]{.175\linewidth}
    \centering
    \includegraphics[scale=1]{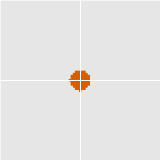}
    \caption{\label{fig:spatial_evolution1_DLCM_a}}
  \end{subfigure}
  \begin{subfigure}[b]{.175\linewidth}
    \centering
    \includegraphics[scale=1]{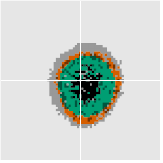}
    \caption{\label{fig:spatial_evolution1_DLCM_b}}
  \end{subfigure}
  \begin{subfigure}[b]{.175\linewidth}
    \centering
    \includegraphics[scale=1]{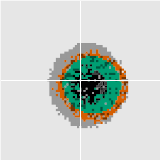}
    \caption{\label{fig:spatial_evolution1_DLCM_c}}
  \end{subfigure}
  
  \begin{subfigure}[b]{.175\linewidth}
    \centering
    \includegraphics[scale=1]{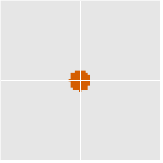}
    \caption{\label{fig:spatial_evolution1_PDE_a}}
  \end{subfigure}
  \begin{subfigure}[b]{.175\linewidth}
    \centering
    \includegraphics[scale=1]{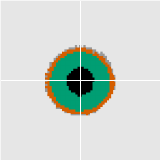}
    \caption{\label{fig:fig:spatial_evolution1_PDE_b}}
  \end{subfigure}
  \begin{subfigure}[b]{.17\linewidth}
    \centering
    \includegraphics[scale=1]{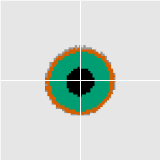}
    \caption{\label{fig:fig:spatial_evolution1_PDE_c}}
  \end{subfigure}
  \caption{\review{Solutions corresponding to
      \figref{regional_evolution1} at $t = 0$, $25$, $30$,
      respectively, with DLCM in the top row and the PDE in the bottom
      row. Color scheme as in \figref{PDE_creeping_example}. For DLCM,
      darker gray shows quiescent cells below $\kappadeath$, and
      darker shades of red and green indicate doubly occupied
      voxels.}}
  \label{fig:spatial_evolution1}
\end{figure}

\figref{regional_evolution1} shows the evolution of the tumor's
characteristic volumes for both models together with the tumor
roundness \eqref{eq:roundness} using the larger value of
$\sigma$. \figref{spatial_evolution1} shows snapshots of the solution
corresponding to \figref{regional_evolution1}. Both solutions remain
close to being radially symmetric during the full simulations since
the small second mode instability does not show during these
relatively short time intervals. Notably, the creeping effect becomes
apparent earlier for the DLCM simulations.

\begin{figure}[!ht]
  \centering
  \begin{subfigure}[b]{.45\linewidth}
  \centering
    \includegraphics[scale=1]{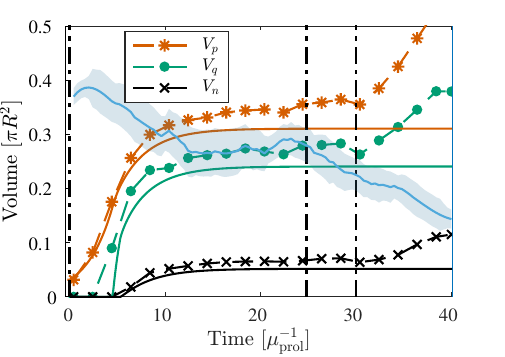}
    \caption{\label{fig:regional_evolution2_DLCM}}
  \end{subfigure}\qquad
  \begin{subfigure}[b]{.45\linewidth}
  \centering
    \includegraphics[scale=1]{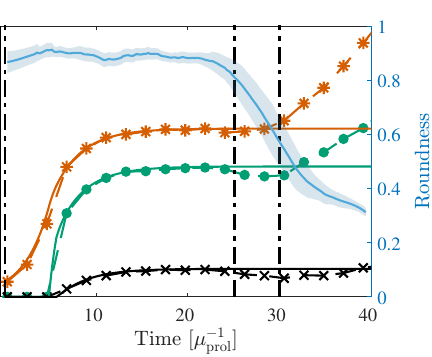}
    \caption{\label{fig:regional_evolution2_PDE}}
  \end{subfigure}
  \caption{\review{Solutions from DLCM (left) and PDE (right),
      respectively, using the standard parameters
      \tabref{PDE_parameters_numerical} and
      $\sigma = 5 \cdot 10^{-4}$, which is insufficient to ensure
      radial symmetry. The 2D solutions (dashed) are compared with the
      1D solution
      \eqref{eq:PDE_regional_relations}--\eqref{eq:PDE_regional_dynamics}
      (solid) using the same parameters and shown in the same units of
      time. The PDE model tumor splits into two parts just before the
      final time where the roundness metric is undefined. The vertical
      lines indicate the times $t \in [0, 25, 30]$ of the spatial
      solutions shown in \figref{spatial_evolution2}.}}
  \label{fig:regional_evolution2}
\end{figure}

\begin{figure}[!ht]
  \centering
        \begin{subfigure}[b]{.175\linewidth}
    \centering
    \includegraphics[scale=1]{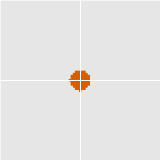}
    \caption{\label{fig:spatial_evolution2_DLCM_a}}
   \end{subfigure}
    \begin{subfigure}[b]{.175\linewidth}
    \centering
    \includegraphics[scale=1]{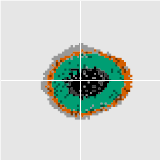}
    \caption{\label{fig:spatial_evolution2_DLCM_b}}
  \end{subfigure}
      \begin{subfigure}[b]{.175\linewidth}
    \centering
    \includegraphics[scale=1]{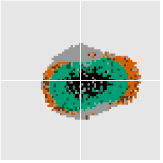}
    \caption{\label{fig:spatial_evolution2_DLCM_c}}
  \end{subfigure}%
  
        \begin{subfigure}[b]{.175\linewidth}
    \centering
    \includegraphics[scale=1]{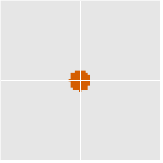}
    \caption{\label{fig:spatial_evolution2_PDE_a}}
  \end{subfigure}
        \begin{subfigure}[b]{.175\linewidth}
    \centering
    \includegraphics[scale=1]{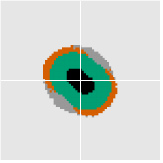}
    \caption{\label{fig:spatial_evolution2_PDE_b}}
  \end{subfigure}
        \begin{subfigure}[b]{.175\linewidth}
    \centering
    \includegraphics[scale=1]{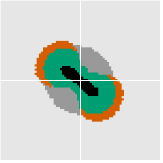}
    \caption{\label{fig:spatial_evolution2_PDE_c}}
  \end{subfigure}
    \caption{\review{Solutions corresponding to
    \figref{regional_evolution2} at $t = 0$, $25$, $30$, respectively.  Color scheme as in \figref{spatial_evolution1}}}
  \label{fig:spatial_evolution2}
\end{figure}

Similarly, \figref{regional_evolution2} and
\figref{spatial_evolution2} shows regional evolution and spatial
solutions, respectively, for the lower value of $\sigma$. Both
models display a significant decrease in roundness (accompanied by a
total volume increase) as the tumor begins to split in two some time
after the growth has plateaued. A notable difference between the
growth curves during this process is that the DLCM tumor does not
reach a fully stationary state before the splitting, possibly due to
the higher exposure to noise in the stochastic model.

Finally, \figref{various_spatial} shows the resulting morphology of
both models using four different and smaller values of $\sigma$. For
these experiments we use use a lower $\kappadeath = 0.92$ for a
thinner proliferating rim which results in effective parameters
$\mudeathbar = 1.0$ and $\lambdabar = 1.1$. Again, the radially
symmetric tumor displays significant creeping only for the DLCM model
at the selected final time (cf.~\figref{various_spatial_DLCM_a} and
\figref{various_spatial_PDE_a}). The morphologies of the tumors are
similar in terms of emergent unstable modes and sizes of the
characteristic regions, e.g., \review{the tumors beginning to separate
  into two is seen in both \figref{various_spatial_DLCM_b} and
  \figref{various_spatial_PDE_b}.} For the most unstable case using
$\sigma = 0$ in \figref{various_spatial_DLCM_d}, the DLCM tumor grows
somewhat larger and with different morphological and regional
characteristics. Finally, \figref{various_spatial_DLCM_a} and
\figref{various_spatial_DLCM_c} display small cell clusters detaching
from the tumor to grow on their own, a phenomena that we never observe
in our simulations of the PDE model
\review{(cf.~\figref{various_spatial_PDE_a} and \figref{various_spatial_PDE_c})}.

\begin{figure}[!ht]
  \centering
        \begin{subfigure}[b]{.175\linewidth}
    \centering
    \includegraphics[scale=1]{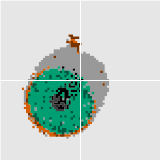}
    \caption{\label{fig:various_spatial_DLCM_a}}
   \end{subfigure}
    \begin{subfigure}[b]{.175\linewidth}
    \centering
    \includegraphics[scale=1]{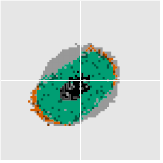}
    \caption{\label{fig:various_spatial_DLCM_b}}
  \end{subfigure}
      \begin{subfigure}[b]{.175\linewidth}
    \centering
    \includegraphics[scale=1]{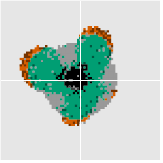}
    \caption{\label{fig:various_spatial_DLCM_c}}
  \end{subfigure}
        \begin{subfigure}[b]{.175\linewidth}
    \centering
    \includegraphics[scale=1]{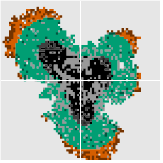}
    \caption{\label{fig:various_spatial_DLCM_d}}
  \end{subfigure}%
  
        \begin{subfigure}[b]{.175\linewidth}
    \centering
    \includegraphics[scale=1]{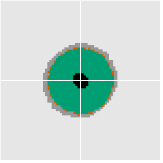}
    \caption{\label{fig:various_spatial_PDE_a}}
  \end{subfigure}
    \begin{subfigure}[b]{.175\linewidth}
    \centering
    \includegraphics[scale=1]{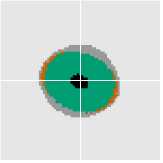}
    \caption{\label{fig:various_spatial_PDE_b}}
  \end{subfigure}
    \begin{subfigure}[b]{.175\linewidth}
    \centering
    \includegraphics[scale=1]{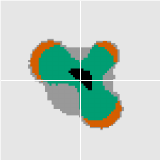}
    \caption{\label{fig:various_spatial_PDE_c}}
  \end{subfigure}
    \begin{subfigure}[b]{.175\linewidth}
    \centering
    \includegraphics[scale=1]{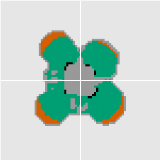}
    \caption{\label{fig:various_spatial_PDE_d}}
  \end{subfigure}
  \caption{Solutions to DLCM (top row) and the PDE (bottom row) for
    various parameters highlighting different aspects of the model in
    relation to \eqref{eq:dispersion_relation_explicit}. Left to right
    shows decreasing surface tension with $\sigma = 2 \cdot 10^{-3}$,
    $7 \cdot 10^{-4}$, $1 \cdot 10^{-4}$, $0$, and top row uses
    parameters $\kappaprol = 0.94$, $\kappadeath = 0.92$,
    $\mudeath = 0.5$, with bottom row using effective parameters
    $\mudeathbar = 1.0$, $\lambdabar = 1.1$. Solutions are shown at
    $t = 30$. Color scheme as in \figref{spatial_evolution1}}
  \label{fig:various_spatial}
\end{figure}

\section{Discussion}
\label{sec:discussion}

We have analyzed the morphological stability of a PDE model of
avascular tumors. The PDE was derived to closely represent the
mean-field of a stochastic model expressed in the DLCM framework.
Assuming radial symmetry in the PDE model, we first characterized the
growth dynamics as well as the stationary state. A linear stability
analysis in two dimensions was subsequently carried out and we found a
dispersion relation that describes the stability of the morphology of
the tumor. Finally, we compared the analytical predictions with
numerical simulations of both the stochastic DLCM and the PDE
model. \review{The observed morphology of the stochastic model was
  found to be in line with the predictions from the PDE analysis,
  including also the proposed} relations required for a stable
stationary state.

The Saffman-Taylor instability acts on the tumor boundary, where the
determining factors are the porous medium permeability and the tissue
viscosity as summarized in the coefficients $D$ and $\Dout$. For a
transient tumor growth, these coefficients determine whether the tumor
boundary is stable ($\Dout > D$) or unstable ($\Dout < D)$. In the
former case, as the tumor growth slows down due to oxygen starvation,
perturbations on the boundary are amplified, thus destroying radial
symmetry unless the surface tension parameter $\sigma$ is large
enough; this was shown analytically in
\S\ref{subsec:analysis_stability_relations} and experimentally in
\S\ref{subsec:results_radial_symmetry}. This also explains the
asymmetry and unlimited growth of the original DLCM tumors presented
in \cite{engblom2018scalable}, which did not implement an explicit
surface tension effect.

\review{Morphological instability arising in conditions when nutrients
  are scarce is in line with analyses and simulations of other models
  of tumor growth and cell colonies. The model in
  \cite{cristini2003nonlinear} is a fluid-based PDE model including
  Darcy flow, with cell growth proportional to nutrient level, and a
  constant apoptosis rate. The model boundary velocity is explicitly
  dependent on the nutrient gradient, in contrast to the model
  analyzed herein that nevertheless displays a similar nutrient
  dependent growth instability. Further examples show similar
  instabilities in nutrient-deprived conditions both in agent-based
  models of cell colonies \cite{gerlee2007stability} and in hybrid
  models of tumor growth including cell cycles and ECM
  \cite{anderson2006tumor}. The former model tumor expands solely due
  to cell proliferation and not to pressure-driven migration, thus
  suggesting the persistence of this type of instability across a
  range of models.}


Interestingly, our model predicts a creeping effect in which even an
otherwise stable tumor as a whole migrates towards the oxygen
source. Experiments in \S\ref{subsec:results_stochasticity} suggest
that the stochastic model has a somewhat stronger tendency for
creeping than the PDE model. The larger noise levels of the DLCM model
is a good candidate explanation for this difference. Moreover, the
creeping effect cannot be inhibited by surface tension, but must be
controlled through additional mechanisms of the model such as an
elastic response from the external tissue (see
\cite{walker2023minimal} for a review on minimal morphoelastic tumor
models). Thus, the creeping phenomena prompts the following questions:
Does creeping occur \textit{in vitro} or \textit{in vivo} and if so,
over what timescales? If not, what mechanisms keep it from occurring;
alternatively, \review{what assumptions could make our model more realistic
in this regard?}

An additional observation from the DLCM model was the detachment of
cell clusters even in the case of a surface tension large enough to
support a radially symmetric solution. This is most likely due to the
discrete and random nature of individual cells in the
model. Detachment is therefore a distinct feature of on-lattice
stochastic modeling in this context, which appears to be a more realistic
representation of tumors growing under noisy conditions than a purely
fluid mechanical continuous model can offer.
%
\review{That said, the stochasticity of the DLCM model does not have a
  significant impact on the region volumes and final size of a
  radially symmetric tumor. Rather, assuming stability, these outcomes
  are fairly accurately governed by the deterministic relations
  \eqref{eq:PDE_regional_relations}--\eqref{eq:PDE_regional_dynamics}.
  The process noise does, however, have an impact on the morphology of
  the tumor under less surface tension, which implicitly affects the 
  tumor size first when the boundary has been significantly distorted.  
  A deeper investigation of this is outside the scope of the experiments
  reported here.}

One fundamental difference between the two models is the spatial
exclusion principle which is implemented in the DLCM framework via the
carrying capacity. A consequence of this can be seen when comparing
\figref{spatial_evolution2_DLCM_c} and
\figref{spatial_evolution2_PDE_c}, where the PDE tumor is close to
separating into two pieces, while the DLCM tumor in contrast retains
an oval shape. The latter is due to necrotic cells which degrade while
still occupying voxels, thereby slowing down the mass
flow. Similarly, \figref{various_spatial_DLCM_d} and
\figref{various_spatial_PDE_d} display significant differences in
morphology and size, where the former model supports a larger necrotic
region. These examples highlight emergent differences between the two
ways of modeling cell extent and migration and call for an input of
biological observations to approach a higher level of realism.

We end by briefly mentioning some potential modifications that may
improve on the expressive power of the model. Considering the PDE model
first, we see from \eqref{eq:PDE_pressure_proliferatingregion} that
the ambient pressure becomes very large for a stiff external
medium. Thus, when modeling such scenarios the addition of
pressure-dependent effects such as pressure-driven oxygen flow or a
pressure-based proliferation rate become relevant. Such additions
carry over to the stochastic framework in a fairly straightforward
manner. To further improve on the realism of the nutrient modeling, a
limit on the diffusion flux of the oxygen into the tumor across its
boundary could also be considered (cf.~\cite{folkman1973self}). While
these model modifications are readily implemented, the resulting
emergent behavior is not obvious and a precise mathematical analysis
is more involved due to the increase of nonlinear feedback mechanisms.

In conclusion, our basic stochastic avascular tumor model turned out
to be a fruitful tool in leading to some new insights as well as
investigations of the effects of various coupling- and feedback
relations. Building and analyzing the mean-field PDE in tandem with
the discrete stochastic model forced us to think more deeply in terms
of trade-offs for continuum models of discrete cellular agents; this
approach limits model refinements to a certain extent since the
discrete and the continuous versions need to be consistent. We
anticipate that the combination of a mean-field PDE and a stochastic
model built from first principles will enable the development of
filtering tools aimed specifically at integrative Bayesian approaches
to data-driven applications. \review{For example, the stochastic model
  can be used to characterize the process noise of a Kalman filter
  that leverages the PDE model as its state transition.}

%

\subsection{Availability and reproducibility}
\label{subsec:reproducibility}

The computational results can be reproduced with release 1.4 of the
URDME open-source simulation framework \cite{URDMEpaper}, available
for download at \url{www.urdme.org} \review{(see the avascular tumor
examples and the associated README in the DLCM workflow)}.


\printbibliography[title={References}]


\appendix
\clearpage

\section{Supporting perturbation results}
\label{apx:propositions_stability_analysis}

We consider a continuous quantity $q$ on two different circular
domains $\Omega_1, \Omega_2 \subseteq \mathbb{R}^2$, where $\Omega_2$
is a circle of radius $r$ inside $\Omega_1$, \review{both domains sharing origin}. We write
$\subquant{q}{1} = q|_{\Omega_1}$ and
$\subquant{q}{2} = q|_{\Omega_2}$. We assume $q$ to be continuous
across the closed interface between the two domains and that $q$ is
radially symmetric. We then introduce a small perturbation $\rpert{}$
of $r$, and thus $\rpert{} = \rpert{}(\theta)$ for
$\theta \in (-\pi,\pi]$, possibly inducing $q = q(s, \theta)$ when $q$
depends on $\rpert{}$ in some manner. Specifically, we let $q$ depend
on $\rpert{}$ via the continuity properties of $q$ across the
interface and study how $q$ responds to the perturbation in $r$.

\begin{lemma}[\textit{Perturbation continuity}]
  \label{lemma:continuity}
  Let the interface be perturbed as
  \review{
  \begin{align}
    \label{eq:appendix_perturbed_interface}
    \rpert{}(\theta) &= r + \epsilon \zeta_k
                       \cos(k\theta),
  \end{align}
  for modes $k = 1, 2, ..., $}
 and write the induced perturbation on $q$ in the form
 \review{
  \begin{align}
    \label{eq:appendix_perturbation_quantities}
    \tilde{q}(s,\theta) &= q(s) +  \epsilon 
                           \phi_k(s) \cos(k\theta),
  \end{align}
  where $s$ is some radial position in $\Omega_1 \cup \Omega_2$.}
  We assume that the quantity $q$ is
  continuous across the interface. Then, to first order in $\epsilon$,
  \begin{align}
    \label{eq:appendix_perturbation_continuity}
    \subquant{q}{1}{'}(r)\zeta_k + \phi_k^{(1)}(r) = \subquant{q}{2}{'}
    (r)\zeta_k + \phi_k^{(2)}(r),
    \quad k \ge 1,
  \end{align}
  where $\phi_k^{(1)} = \phi_k|_{\Omega_1}$ and $\phi_k^{(2)} =
  \phi_k|_{\Omega_2}$, which is thus continuous across $r$ if $q$ is $C^1$
  continuous.
\end{lemma}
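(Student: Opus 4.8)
The plan is to translate the hypothesis that $q$ is continuous across the \emph{perturbed} interface into a single identity holding pointwise in $\theta$, and then to read off the claim by matching powers of $\epsilon$. The key conceptual point is that continuity must be imposed at the displaced interface location $\tilde{r}(\theta)$ rather than at the unperturbed radius $r$; the discrepancy between these two evaluation points is exactly what generates the terms $\subquant{q}{1}{'}(r)\zeta_k$ and $\subquant{q}{2}{'}(r)\zeta_k$ in \eqref{eq:appendix_perturbation_continuity}.

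Concretely, first I would write the continuity condition across the interface as
\[
  \pert{q}{1}(\tilde{r}(\theta),\theta) = \pert{q}{2}(\tilde{r}(\theta),\theta),
  \qquad \theta \in (-\pi,\pi],
\]
and substitute the perturbed interface \eqref{eq:appendix_perturbed_interface} together with the perturbed-quantity ansatz \eqref{eq:appendix_perturbation_quantities}. Next I would Taylor expand each side about $s = r$ in the small parameter $\epsilon$. The zeroth-order contribution reproduces the unperturbed continuity $\subquant{q}{1}(r) = \subquant{q}{2}(r)$, which holds by assumption and therefore drops out. For the first-order contribution I would use
\[
  \subquant{q}{j}(\tilde{r}(\theta)) = \subquant{q}{j}(r)
  + \epsilon\,\zeta_k\cos(k\theta)\,\subquant{q}{j}{'}(r) + \Ordo{\epsilon^2},
\]
while the perturbation piece of \eqref{eq:appendix_perturbation_quantities} contributes only $\epsilon\,\phi_k^{(j)}(r)\cos(k\theta)$ at this order, since evaluating $\phi_k^{(j)}$ at $\tilde{r}$ rather than $r$ differs by $\Ordo{\epsilon^2}$.

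Collecting the coefficient of $\epsilon\cos(k\theta)$ on each side, and using that the resulting identity must hold for every $\theta$ so that the common factor $\cos(k\theta)$ cancels, then yields \eqref{eq:appendix_perturbation_continuity} directly for each $k \ge 1$. The stated $C^1$ corollary follows at once: if $q$ is $C^1$ across the interface then $\subquant{q}{1}{'}(r) = \subquant{q}{2}{'}(r)$, the terms proportional to $\zeta_k$ cancel, and only $\phi_k^{(1)}(r) = \phi_k^{(2)}(r)$ survives.

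I do not expect a genuine obstacle here; the only real care lies in the linearization bookkeeping --- verifying that evaluating $\phi_k^{(j)}$ at the shifted radius is a higher-order effect, and that a single-mode interface perturbation induces, to first order, a single-mode response with no mode coupling. Both hold because any product of two $\Ordo{\epsilon}$ quantities is negligible at $\Ordo{\epsilon}$, so cross-mode terms first appear at $\Ordo{\epsilon^2}$.
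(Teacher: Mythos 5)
Your proposal is correct and follows essentially the same route as the paper's proof: impose continuity at the perturbed interface, Taylor expand both sides about $r$, cancel the zeroth-order terms via the unperturbed continuity $\subquant{q}{1}(r) = \subquant{q}{2}(r)$, and match the coefficients of $\epsilon\cos(k\theta)$. The bookkeeping points you flag (the shift in the evaluation point of $\phi_k$ being $\Ordo{\epsilon^2}$, no mode coupling at first order) are exactly the implicit steps in the paper's expansion.
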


\begin{proof}
  The quantities $\pert{q}{1}$ and $\pert{q}{2}$ are continuous across
  $\rpert{}$,
  i.e.,
  \begin{equation}
    \pert{q}{1}(\rpert{}) =
    \pert{q}{2}(\rpert{}).
    \label{eq:appendix_quantity_continuity}
  \end{equation}
  We expand \eqref{eq:appendix_quantity_continuity} according to the
  assumed structure of the perturbations
 \eqref{eq:appendix_perturbed_interface} and
 \eqref{eq:appendix_perturbation_quantities} to
  get that 
  \review{
  \begin{eqnarray}
    q^{(1)}(r) + \epsilon q^{(1)}{'}(r) \zeta_k \cos(k\theta) &+& \epsilon 
                                                     \phi_k^{(1)}(r) \cos(k\theta) + \Ordo{\epsilon^2} = \nonumber \\ 
    q^{(2)}(r) + \epsilon q^{(2)}{'}(r) \zeta_k \cos(k\theta) &+& \epsilon 
                                                     \phi_k^{(2)}(r) \cos(k\theta) +
                                                     \Ordo{\epsilon^2}.
                                                     \label{eq:appendix_expanded_continuity}
  \end{eqnarray}
  }
  Now, the unperturbed quantities are assumed to be continuous across the
  unperturbed interface, i.e., $q^{(1)}(r) = q^{(2)}(r)$, \review{and we arrive at \eqref{eq:appendix_perturbation_continuity} to first
  order in $\epsilon$.}
\end{proof}

Lemma~\ref{lemma:continuity} extends in the obvious way in case there
are multiple convex interfaces and the perturbation is imposed on one
of them.

We next show how the derivatives of the perturbation coefficients
relate.
\begin{lemma}[\textit{Perturbation $C^1$-continuity}]
  \label{lemma:continuous_derivative}
  Under the same conditions as in Lemma~\ref{lemma:continuity} and adding
  the assumption that $C^1$-continuity for $q$ also holds across the
  interface, \review{then to first order in $\epsilon$}
  \begin{align}
    \subquant{q}{1}{''}(r)\zeta_k+ \phi_k^{(1)}{'}(r) = \subquant{q}{2}{''}
    (r)\zeta_k +
    \phi_k^{(2)}{'}(r), \quad k \ge 1.
    \label{eq:appendix_perturbation_derivative_continuity}
  \end{align}
\end{lemma}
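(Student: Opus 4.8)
The plan is to mirror the proof of Lemma~\ref{lemma:continuity}, but now extracting the content of $C^1$-continuity that goes beyond plain continuity: since $\pert{q}{1}$ and $\pert{q}{2}$ already agree \emph{along} the perturbed interface (this is exactly what Lemma~\ref{lemma:continuity} used), the extra information in $C^1$-continuity is that their \emph{normal} derivatives agree across it. The one genuinely new ingredient is that the outward normal of the perturbed interface is itself tilted away from the radial direction at order $\epsilon$, so some care is needed to confirm that this tilt does not contribute at first order.

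First I would write the perturbed interface as the polar curve $s = \rpert{}(\theta) = r + \epsilon\zeta_k\cos(k\theta)$ and compute its outward unit normal. Parametrizing by $\theta$, the tangent is $\rpert{}'\,\boldsymbol{e}_r + \rpert{}\,\boldsymbol{e}_\theta$, so the outward normal is proportional to $\rpert{}\,\boldsymbol{e}_r - \rpert{}'\,\boldsymbol{e}_\theta$; using $\rpert{}' = -\epsilon\zeta_k k\sin(k\theta)$ this gives, to first order, $\boldsymbol{n} = \boldsymbol{e}_r + (\epsilon\zeta_k k\sin(k\theta)/r)\,\boldsymbol{e}_\theta + \Ordo{\epsilon^2}$, a radial leading term with an $\Ordo{\epsilon}$ angular correction.

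Next I would express the gradient of $\tilde q(s,\theta) = q(s) + \epsilon\phi_k(s)\cos(k\theta)$ in polar coordinates, $\nabla\tilde q = \partial_s\tilde q\,\boldsymbol{e}_r + s^{-1}\partial_\theta\tilde q\,\boldsymbol{e}_\theta$. The crucial observation is that $\partial_\theta\tilde q = -\epsilon\phi_k(s)k\sin(k\theta)$ is already $\Ordo{\epsilon}$, so its product with the $\Ordo{\epsilon}$ angular component of $\boldsymbol{n}$ is $\Ordo{\epsilon^2}$ and may be dropped. Hence to first order the normal derivative reduces to the purely radial derivative, $\nabla\tilde q\cdot\boldsymbol{n} = \partial_s\tilde q + \Ordo{\epsilon^2} = q'(s) + \epsilon\phi_k'(s)\cos(k\theta) + \Ordo{\epsilon^2}$. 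This is the step I expect to be the main obstacle --- everything hinges on recognizing that the angular tilt of the normal only couples to an already small angular derivative, leaving the radial derivative as the sole first-order contribution.

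Finally I would evaluate this expression on the interface $s = \rpert{}(\theta)$ and Taylor expand about $s = r$: the term $q'(\rpert{})$ contributes $q'(r) + \epsilon q''(r)\zeta_k\cos(k\theta)$, while $\epsilon\phi_k'(\rpert{})\cos(k\theta)$ contributes $\epsilon\phi_k'(r)\cos(k\theta)$ to first order. Imposing $C^1$-continuity means setting the resulting expression equal for $\subquant{q}{1}$ and $\subquant{q}{2}$; the zeroth-order terms cancel by the unperturbed $C^1$-continuity $\subquant{q}{1}{'}(r) = \subquant{q}{2}{'}(r)$, and matching the $\cos(k\theta)$ coefficients yields exactly \eqref{eq:appendix_perturbation_derivative_continuity}.
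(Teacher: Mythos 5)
Your proposal is correct and follows essentially the same route as the paper's proof: both reduce the normal derivative on the perturbed interface to the radial derivative by observing that the angular component of the normal and the angular derivative of $\tilde q$ are each $\Ordo{\epsilon}$, so their product is negligible, and then Taylor expand about $s=r$ and match the first-order $\cos(k\theta)$ coefficients. Your explicit computation of the tilted normal from the tangent $\rpert{}'\,\boldsymbol{e}_r+\rpert{}\,\boldsymbol{e}_\theta$ merely makes precise what the paper asserts directly, namely that $\hat{n}_\theta$ is proportional to $\epsilon$.
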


\begin{proof}
  The \review{directional} derivative of a perturbed quantity in polar coordinates at any
  point on the interface $\rpert{}(\theta)$ becomes
  \review{
  \begin{eqnarray}
    (\hat{n}_r, \hat{n}_{\theta}) \cdot \nabla 
    \left(q(\rpert{}) + \epsilon \phi_k(\rpert{})
    \cos(k\theta)\right) = \nonumber \\ 
    \label{eq:appendix_gradient}
    q'(\rpert{})\hat{n}_s + \epsilon \phi_k'(\rpert{})
    \cos(k\theta))\hat{n}_s - \frac{k}{\rpert{}}\epsilon
    \phi_k(\rpert{}) \sin(k\theta)\hat{n}_{\theta},
  \end{eqnarray}
  }
  where $\boldsymbol{n} = (\hat{n}_r, \hat{n}_{\theta})$ is the normal
  vector to $\rpert{}$ in polar coordinates. The angular component,
  $\hat{n}_{\theta}$, is proportional to
  $\epsilon$ and thus the third
  term in \eqref{eq:appendix_gradient} is $\Ordo{\epsilon^2}$. We use this
  \review{simplified expression of the directional derivative} for the $C^1$-continuity of
  $q^{(1)}$ and $q^{(2)}$ at $\rpert{}$, and expand both around $r$ to get
  \review{
  \begin{eqnarray}
    \subquant{q}{1}{'}(r) + \epsilon \subquant{q}{1}{''}(r) \zeta_k
    \cos(k\theta) + \epsilon \phi_k^{(1)}{'}(r)
    \cos(k\theta) + \Ordo{\epsilon^2} = \nonumber \\ \subquant{q}{2}{'}(r) +
    \epsilon \subquant{q}{2}{''}(r)
    \zeta_k \cos(k\theta) + \epsilon \phi_k^{(2)}{'}(r)
    \cos(k\theta) + \Ordo{\epsilon^2}.
  \end{eqnarray}
  }
  The derivatives are equal in the unperturbed case and again considering
  first order in $\epsilon$, \review{we finally arrive at
  \eqref{eq:appendix_perturbation_derivative_continuity} above.}
\end{proof}

These two lemmas are enough to derive the propagation of a known
perturbation on one interface through a system of $C^0$- and
$C^1$-continuous quantities connected at shared interfaces. We
continue by applying these lemmas to a perturbed oxygen field governed
by \eqref{eq:PDE_oxygen_law} with three interfaces.

\paragraph{Preliminary perturbation response}

The pressure field \eqref{eq:PDE_pressure_law} is coupled to the
oxygen distribution obeying \eqref{eq:PDE_oxygen_law}. To analyze the
fully coupled system as is done in
\S\ref{subsec:linear_stability_analysis}, an estimate of the response
of an outer boundary perturbation into the inner regions via this
feedback relation is required. As in
\S\ref{subsec:analysis_radial_symmetry} we assume a circular tumor
with cell density $\rho$ approximately constant and equal to one, and
hence the oxygen field $c$ is initially given by
\eqref{eq:PDE_oxygen_radialsymmetry}. Let $\Omegaext$, $\Omega_p$,
$\Omega_q$, $\Omega_n$, $\subseteq$ $\mathbb{R}^2$, where $\Omega_n$
is a disk and the rest are annuli according to
\figref{tumor_region_schematic}. The outer domain $\Omegaext$ is the
annulus with small radius $r_p$ and large radius $R$. We write
$\subquant{c}{i} = c|_{\Omega_i}$ for $i \in
\{\text{ext},p,q,n\}$. The perturbations of the inner regions as
induced from an arbitrary \review{perturbation mode} on $r_p$ and the
continuity properties of $c$ across each interface are then covered by
the following result.

\begin{lemma}[\textit{Perturbation response}]
  \label{lemma:oxygen_perturbation_resonse}
  Let the outer tumor boundary $\interface{p}$ be perturbed by
  \review{
  \begin{equation}
    \rpert{p}(\theta) = \interface{p} + \epsilon
    \rcoef{p}\cos(k\theta),
    \label{eq:front_perturbation_appendix}
  \end{equation}
  }
  for some $\lvert \epsilon \rvert \ll 1$. Write the induced inner
  perturbations on the same form
  \review{
  \begin{align}
    \label{eq:inner_regions_perturbations_appendix}
    \begin{split}
      \rpert{q}(\theta) &= \interface{q} + \epsilon \rcoef{q}
      \cos(k\theta),
      \\
      \rpert{n}(\theta) &= \interface{n} + \epsilon \rcoef{n}
      \cos(k\theta),
    \end{split}
  \end{align}
  }
  each defined as the interface between the regions of different \review{cellular}
  growth rates according to \eqref{eq:PDE_source_term} with the oxygen field
  governed by
  \eqref{eq:PDE_oxygen_law}. Then the inner
  perturbations are to first order in $\epsilon$ given by
  \begin{align}
    \begin{split}
      \rcoef{q} &= \frac{1}{k \interface{q}^{k-1} \interface{p}^{k-1}} \times
      \frac{\interface{q}^{2k} - \interface{n}^{2k}}{\interface{q}^2 -
        \interface{n}^2} \times \frac{1 -
        \interface{p}^{2k}}{1-\interface{n}^{2k}} \times \rcoef{p}, \\
            \rcoef{n} &= \frac{\interface{n}^{k-1}}{\interface{p}^{k-1}}
            \times \frac{1 - \interface{p}^{2k}}{1 -
        \interface{n}^{2k}} \times \rcoef{p}.
      \label{eq:regional_perturbation_coefficients}
    \end{split}
  \end{align}
\end{lemma}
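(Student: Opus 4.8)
The plan is to solve for the perturbed oxygen field region by region and then read off the inner interface displacements $\rcoef{q},\rcoef{n}$ from the level-set conditions that \emph{define} those interfaces. Because the source in \eqref{eq:PDE_oxygen_law_radialsymmetry} is piecewise constant, each $\cos(k\theta)$ Fourier mode of a field perturbation solves the homogeneous modal equation $\frac{1}{r}(r\phi')'-\frac{k^2}{r^2}\phi=0$, with general solution $\phi(r)=A r^{k}+B r^{-k}$. I would therefore write the field perturbation as $\epsilon\phi(r)\cos(k\theta)$ on the consuming annulus $\interface{n}\le r\le\interface{p}$ and as $\epsilon\phi^{(\mathrm{ext})}(r)\cos(k\theta)$ on the external ring $\interface{p}\le r\le 1$, with $\phi=A r^{k}+B r^{-k}$ and $\phi^{(\mathrm{ext})}=A_{\mathrm{ext}}r^{k}+B_{\mathrm{ext}}r^{-k}$, superimposed on the radial field $c(r)$ of \eqref{eq:PDE_oxygen_radialsymmetry}. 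In the necrotic core the unperturbed field is the constant $\kappadeath$ (no sink, bounded at the origin, prescribed value on its boundary), so its perturbation vanishes identically; this is the first simplification.

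Next I would impose the boundary and matching data. At the fixed outer boundary $r=1$ the Dirichlet value $c=\cout$ is unperturbed, forcing $\phi^{(\mathrm{ext})}(1)=0$, i.e.\ $B_{\mathrm{ext}}=-A_{\mathrm{ext}}$. At the two genuine interfaces $\interface{n}$ and $\interface{p}$ the source jumps (from $0$ to $-\lambda$ and back), so the unperturbed field is $C^1$ but $c''$ suffers a jump of magnitude $\lambda$ across each; I would apply Lemma~\ref{lemma:continuity} (continuity) and Lemma~\ref{lemma:continuous_derivative} ($C^1$-continuity) at both, noting that the displacement of the source \emph{support} enters only through these flux-jump conditions rather than as a separate volume source. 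Here lies what I expect to be the conceptual crux: the unperturbed radial flux \emph{vanishes} at the necrotic front, $c'(\interface{n})=0$, since the enclosed sink is empty. Consequently $\rcoef{n}$ drops out of the continuity relation at $\interface{n}$ (which merely gives $\phi(\interface{n})=0$) and survives only in the $C^1$ relation, where the second-derivative jump yields $\phi'(\interface{n})=-\lambda\rcoef{n}$. At $\interface{p}$ the continuity relation gives $\phi(\interface{p})=\phi^{(\mathrm{ext})}(\interface{p})$ and the $C^1$ relation gives $\phi'(\interface{p})-\phi^{(\mathrm{ext})}{}'(\interface{p})=-\lambda\rcoef{p}$, which is how the prescribed outer perturbation \eqref{eq:front_perturbation_appendix} feeds into the system.

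These relations close a linear system for $A_{\mathrm{ext}}$, the consuming pair $(A,B)$, and $\rcoef{n}$ in terms of $\rcoef{p}$. I would first use $\phi(\interface{n})=0$ to set $B=-A\interface{n}^{2k}$, so that $\phi(r)=A(r^{k}-\interface{n}^{2k}r^{-k})$ and $\phi'(\interface{n})=2kA\interface{n}^{k-1}$, giving $\rcoef{n}$ directly in terms of $A$; the two $\interface{p}$ relations (together with $B_{\mathrm{ext}}=-A_{\mathrm{ext}}$) then fix $A$ in terms of $\rcoef{p}$. To obtain $\rcoef{q}$ I would use that $\interface{q}$ is a level set of $c$ \emph{inside} the consuming region with no source jump, so only the level-set condition $c(\rpert{q}(\theta),\theta)=\kappaprol$ applies; to first order this reads $c'(\interface{q})\rcoef{q}+\phi(\interface{q})=0$, i.e.\ $\rcoef{q}=-\phi(\interface{q})/c'(\interface{q})$ with $c'(\interface{q})=\tfrac{\lambda}{2}(\interface{q}^2-\interface{n}^2)/\interface{q}$ read off from \eqref{eq:PDE_oxygen_radialsymmetry}.

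The remainder is routine algebra, and I expect the main obstacle to be the bookkeeping in collapsing the two $\interface{p}$-equations into a clean formula for $A$: eliminating $A_{\mathrm{ext}}$ leaves a bracket that must simplify to $2\interface{p}^{k}(\interface{n}^{2k}-1)/(\interface{p}^{2k}-1)$, after which $A\propto (1-\interface{p}^{2k})/[\interface{p}^{k}(1-\interface{n}^{2k})]\,\rcoef{p}$. Substituting this into the expressions for $\rcoef{n}$ and $\rcoef{q}$ and cancelling the common factors of $\lambda$, $k$, and powers of $\interface{p}$ should reproduce \eqref{eq:regional_perturbation_coefficients} exactly. Finally I would verify the degenerate limits $\interface{n}\to 0$ (necrotic core absent, where regularity at the origin must instead kill the $r^{-k}$ branch, consistent with $B=-A\interface{n}^{2k}\to 0$) and $\interface{n}\to\interface{q}$ to confirm the formulas stay well defined.
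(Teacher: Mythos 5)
Your proposal is correct and takes essentially the same route as the paper: solve the homogeneous modal equation $\phi=Ar^{k}+Br^{-k}$ in each region, match across interfaces with Lemmas~\ref{lemma:continuity} and \ref{lemma:continuous_derivative} (the source jump entering only through the $C^1$ relations), and read off $\rcoef{q}$, $\rcoef{n}$ from the level-set conditions --- and your linear system does reproduce \eqref{eq:regional_perturbation_coefficients}. Your merging of the proliferative and quiescent annuli (no source jump at $\interface{q}$) and the uniqueness argument showing the core perturbation vanishes identically are harmless streamlinings of the paper's bookkeeping, which instead carries separate coefficients in all four regions and kills the singular branch in the core by regularity at the origin.
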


\begin{proof}
Similar to \eqref{eq:inner_regions_perturbations_appendix}, we write the
induced perturbation on the quantity (here the oxygen $c$) in each region $i
\in \{\text{ext},p,q,n\}$ as 
\review{
\begin{equation}
  \cpert{i}(r,\theta) = \subquant{c}{i}(r) +  \epsilon \ccoef{i}(r) \cos(k\theta).
    \label{eq:oxygen_perturbations}
\end{equation}
}
We find $\ccoef{i}(r)$ by inserting \eqref{eq:oxygen_perturbations}
into the oxygen equation \eqref{eq:PDE_oxygen_law}, using that the oxygen
consumption is constant in each region, and considering only the first order
terms in $\epsilon$:
\begin{equation}
    \label{eq:oxygen_perturbation_form}
    \ccoef{i}(r) = a_1^{(i)} r^k + a_2^{(i)} r^{-k},
\end{equation}
for some constants $a_1^{(i)}$ and $a_2^{(i)}$. Assuming regularity at
the origin we conclude that $a_2^{(n)} = 0$. The rest of the constants
are determined from the continuity properties at the interfaces as
follows.

Firstly, the oxygen is $C^1$-continuous across each interface, i.e.,
\begin{align}
    \cextpert(\interface{p}) = \subquant{c}{p}(\interface{p}), \quad
    \subquant{c}{p}(\interface{q}) = \subquant{c}{q}(\interface{q}), \quad
    \subquant{c}{q}(\interface{n}) = \subquant{c}{n}(\interface{n}). \nonumber
\end{align}
and we may apply Lemma~\ref{lemma:continuity} and
\ref{lemma:continuous_derivative} to get that
\begin{align}
    \label{eq:oxygen_perturbations_continuity}
    \beta_k^{(\text{ext})}(\interface{p})  = \ccoef{p}(\interface{p}), \quad
    \ccoef{p}(\interface{q})  = \ccoef{q}(\interface{q}), \quad
    \ccoef{q}(\interface{n})  = \ccoef{n}(\interface{n}),
\end{align}
and 
\begin{align}
    \begin{split}
    \label{eq:oxygen_perturbations_derivative_continuity}
    \cextcoef{'}(\interface{p}) - \ccoef{p}{'}(\interface{p}) &= \lambda \rcoef{p}, \\
    \ccoef{p}{'}(\interface{q}) - \ccoef{q}{'}(\interface{q}) &= 0, \\
    \ccoef{q}{'}(\interface{n}) - \ccoef{n}{'}(\interface{n}) &= - \lambda \rcoef{n},
    \end{split}
\end{align}
where the right-hand sides were obtained by differentiation of the radially
symmetric oxygen field, as given by
\eqref{eq:PDE_oxygen_radialsymmetry}.

Secondly, the oxygen levels at the inner interfaces are known and must
equal the corresponding thresholds:
\begin{align}
    \label{eq:oxygen_threshold_relations}
    \pert{c}{q}(\interface{n}) = \kappadeath, \quad
    \pert{c}{q}(\interface{q}) = \kappaprol.
\end{align}
To each of \eqref{eq:oxygen_threshold_relations} we now apply
Lemma~\ref{lemma:continuity}. Considering again the first order terms,
we get
\begin{align}
    \begin{split}
    \label{eq:oxygen_perturbations_threshold_relations}
    \rcoef{q} \subquant{c}{q}{'}(\interface{q}) + \ccoef{q}(\interface{q}) &= 0, \\
    \rcoef{n} \subquant{c}{q}{'}(\interface{n}) + \ccoef{q}(\interface{n}) &= 0.
    \end{split}
\end{align}

Finally, combining
\eqref{eq:oxygen_perturbations_derivative_continuity},
\eqref{eq:PDE_oxygen_radialsymmetry}, and
\eqref{eq:oxygen_perturbations_threshold_relations}, we arrive at the
response \eqref{eq:regional_perturbation_coefficients}.
\end{proof}

In summary, \eqref{eq:regional_perturbation_coefficients} specifies
how the tumor's inner interfaces $\interface{q}$ and $\interface{n}$
respond to an outer perturbation on $\interface{p}$ given that the
interfaces are defined by an oxygen field \eqref{eq:PDE_oxygen_law}.


\section{Numerical methods}
\label{apx:numerical_methods}

\paragraph{PDE solution}
The pressure and oxygen fields \eqref{eq:PDE_pressure_law} and
\eqref{eq:PDE_oxygen_law}, respectively, are readily solved by a
finite element method (FEM). We solve for each quantity using linear
basis functions and, for simplicity, using a lumped mass matrix. Solving
for the oxygen, however, requires special considerations as the oxygen
consumption depends on the oxygen level at each time step. To address
this, we solve for the oxygen iteratively within a smaller time scale,
denoted $\tau$, while holding $t_n$ and all other variables that
depend on $t_n$ constant. We solve the resulting time-dependent heat
equation implicitly using the Euler backward method, while treating
the source-term explicitly. We iterate in pseudo-time $\tau$ until
$\vert c^{\tau+1}-c^{\tau} \vert$ is less than some tolerance after
which we set $c(t_{n+1}) = c^{\tau+1}$. The inner iterations are given
by
\begin{equation}
  \frac{c^{\tau+1} - c^{\tau}}{\Delta \tau} - \Delta c^{\tau+1} = 
  \begin{cases}
    - \lambda \rho(t_n), & c^{\tau} \geq \kappa_{prol}
    \text{ on } \Omega_h(t_n) \\
    0, & \text{otherwise,}
  \end{cases}
\end{equation}
where $\Omega_h(t_n)$ is the discretized tumor domain at time $t_n$.

The advection of cell density according to
\eqref{eq:PDE_conservation_law} is solved by an explicit finite volume
(FV) scheme using the pressure distribution at a given time step. We
consider a square Cartesian grid and an initial uniform cell density
of arbitrary shape placed within an otherwise empty domain where
$\rho = 0$. We keep track of the moving boundary by finding volumes
where $\rho < \rho_{thresh}$. We apply the pressure boundary condition
\eqref{eq:BC_Young-Laplace} on such volumes that are simultaneously
adjacent to volumes with $\rho \geq \rho_{thresh}$ (details on the
implementation of surface tension are found in the next
paragraph). The threshold was determined to ensure a front speed
consistent with the compatibility condition \eqref{eq:PDE_darcys}
during tumor growth and the value $\rho_{thresh} = 0.9$ was chosen to
reduce smearing effects close to the boundary and maintain
$\rho \approx 1$ across the tumor domain.  Furthermore, cell densities
outside the tumor boundary should not consume oxygen, and we therefore
set $\lambda = 0$ for volumes with $\rho < \rho_{thresh}$. Using a
first order upwind scheme to solve \eqref{eq:PDE_conservation_law}
including a noise term amounts to the following FV scheme \review{
\begin{eqnarray}
    \label{eq:FV_scheme}
    \rho_i^{n+1} &=& \rho_i^{n} + \Delta
    t F_{i}^n + \omega \vert \Delta
    t F_{i}^n \vert ^{1/2}
    \times \mathcal{N},\\
    F_{i}^n &=& - \frac{1}{V_i} \sum_{e_{ij}
    \in \partial V_i} \frac{p_j^n - p_i^n}{h_{ij}}e_{ij} R_{ij}^n + \Gamma_i^{n},  \nonumber \\
    R_{ij}^n &=& 
    \begin{cases}
        \rho_i^n, \quad \text{if }  p_i^n - p_j^n \geq 0 \\
        \rho_j^n, \quad \text{if }  p_i^n - p_j^n < 0 \nonumber \\
    \end{cases}
\end{eqnarray}
} at time $t_n$, where $V_i$ is the volume of the volume element,
$\partial V_i$ is the boundary of the volume, $e_{ij}$ is the unique
edge adjacent to volumes $V_i$ and $V_j$. The quantities $\rho_i^{n}$
and $\Gamma_i^{n}$ are the volume average quantities of $\rho$ and
$\Gamma$, respectively, at time $t_n$ taken over volume
$V_i$. Finally, to excite all perturbation modes of the boundary
\review{without discretization bias}, we have added scaled Gaussian
noise to the cell density change as the last term in
\eqref{eq:FV_scheme} dictates, with
$\mathcal{N} \sim \mathcal{N}(0,1)$ and $\omega = 0.025$. \review{The
  form of the noise is suggested by a Wiener process approximation of
  a Poissonian interpretation of the drift term in
  \eqref{eq:FV_scheme} at a system size $\omega^{-1/2}$. For our
  simulations we used a Cartesian discretization of the mesh with edge
  length $\Delta x = 0.02$ and
  $\Delta t = \min(\Delta x, 0.1/\max_i(\vert F_i^n \vert) )$, which
  yielded stable solutions across our experiments.}

\paragraph{Surface tension}

The curvature $C$ in the pressure boundary condition
\eqref{eq:BC_Young-Laplace} is evaluated numerically on the tumor
boundary by estimating the derivatives on the corresponding contour
lines. At each time step, we find the contour lines of the tumor
boundary where $\rho = \rho_{thresh}$ \review{for the PDE model, and
  where $u_i = 1$ for the DLCM model}. These \review{contour lines are
  then parameterized by $(x_{c}(s), y_{c}(s))$ for $s \in [0,S]$
  assuming periodicity, i.e.,
  $(x_{c}(s), y_{c}(s)) = (x_{c}(s+S), y_{c}(s+S))$, since they are
  closed curves}. We find the cubic spline of each coordinate such
that after differentiation we obtain the signed curvature
\begin{equation}
  C = \frac{x_{c}(s)'y_{c}(s)'' - x_{c}(s)'' y_{c}(s)'}{(x_{c}(s){'}^2 +
    y_{c}(s){'}^2)^{3/2}}.
  \label{eq:apx_curvature}
\end{equation}
We then calculate the Young-Laplace pressure difference at each
boundary volume by using the curvature given by
\eqref{eq:apx_curvature} at the contour point closest to the
volume. We include a simple threshold in our implementation to avoid
that a pressure boundary condition is applied to undesired
contours. Specifically, we neglect contour lines that consist of a
number of points less than a fraction $f_{\min} = 0.95$ of the largest
contour. Thus, holes within the tumor or clusters of cells outside the
tumor do not produce any surface tension. This prevents spurious
surface tension and pressure values, which would otherwise be present
in particular in the stochastic model.

\paragraph{DLCM modifications}

The physics of the PDE model motivates two features of the stochastic
mode not present in the original presentation
\cite{engblom2018scalable}, namely surface tension and pressure sinks
due to mass loss.

Surface tension is implemented in the same manner in DLCM as for the
PDE solver. However, the exclusion of certain events in DLCM due to
the voxel carrying capacity introduces a bias associated with the
strength of surface tension. Therefore, we allow cells \review{in
  singly occupied voxels on the boundary to migrate into the
  population with rate coefficient equal to $D_1$}. Without inwards
migration, there is nothing to balance out the sporadic outwards
migration induced by surface tension on the inherently noisy
boundary. In particular, \review{allowing inwards migration} ensures
that higher levels of surface tension do not introduce a bias in the
average front speed of the tumor due to this noise. We note that this
bias is unavoidably present in the initial growth phase of the tumor
when no attractive, necrotic region exists, as can be observed in the
higher initial rate of growth of \figref{regional_evolution1_DLCM}
compared with \figref{regional_evolution2_DLCM}.

The PDE model expresses that mass loss due to cell death produces
pressure sinks that drive cell migration to replace the lost mass, and
that the pressure sinks are proportional to the rate of mass loss, see
\eqref{eq:PDE_source_term}. In the stochastic setting, cell mass loss
occurs for necrotic cells that are degrading at the rate $\mudeg$, and
for simplicity we use that the pressure sinks and pressure sources are
equal to $\mudeath$ and $\muprol$, respectively, akin to the
PDE. Further, to ensure that the degrading cells \review{are pushed by
  the population pressure} to the center of the necrotic region before
they have fully degraded, we derive an approximate value of the
degradation rate $\mudeg$ depending on $\mudeath$ for this to
happen. Consider a fixed, radially symmetric necrotic region and the
motion of a particle from the region's boundary into its center
\review{governed by \eqref{eq:PDE_darcys}. The pressure gradient at
  any point in this region is found from
  \eqref{eq:PDE_pressure_necroticregion} and letting the particle
  start at $r(t = 0) = r_n$, its position is given by}
\begin{equation}
    \label{eq:degrading_cell_motion}
    r(t) = r_n e^{-\mudeath t/2}.
\end{equation}
The expected time $\tau$ it takes for the particle to reach
$r(\tau) = \delta r_n, 0 \leq \delta < 1$ of the necrotic region's
center is thus readily obtained from
\eqref{eq:degrading_cell_motion}. The condition on $\mudeath$ that
ensures that the expected time for the particle to degrade is equal to
$\tau$ is then given by $1/\tau$ as
\begin{equation}
    \mudeg = \frac{\mudeath}{2\log(1/\delta)}. \nonumber
\end{equation}
For our implementation, we assume that $\delta = 0.01$ is enough to
ensure that migration of necrotic cells to the tumor center is
possible and occurs frequently.

\paragraph{\review{PDE effective parameters}}

\review{The effective parameters of the PDE model corresponding to the
  DLCM model parameters are derived by comparing the models radially
  symmetric solutions. There are three parameters that differ in value
  due to model differences: $\muprol$, $\mudeath$, and
  $\lambda$. Firstly, there is a difference in volumetric growth rates
  between the models since the DLCM tumor must undergo both a
  proliferation event and a migration event separately to expand in
  size. From a DLCM model simulation using the parameters in
  \tabref{PDE_parameters_numerical} and $\sigma = 0$, we consider the
  initial exponential growth phase ($r_n = r_q = 0$) such that the
  solution to \eqref{eq:PDE_front_velocity} reduces to
  $r_p(t) = r_p(0) \exp(\muprol/2 \, t)$. We fit observations of $r_p$
  during this phase by minimizing the mean-square error to get an
  estimate of the corresponding effective growth rate of the PDE
  model, and we find that $\muprolbar \approx \muprol/2.7$. Since the
  dimensionless parameter $\mudeathbar$ is proportional to
  $\muprolbar^{-1}$, the former is scaled accordingly. Secondly, we
  note that since doubly occupied voxels consume twice the amount of
  oxygen in DLCM, the effective $\lambda$ can be expected to be
  greater for the corresponding PDE assuming identical characteristic
  regions. The effective parameter $\bar{\lambda}$ is estimated by
  taking the mean of \eqref{eq:PDE_regional_relations} using the fixed
  DLCM model parameter $\kappaprol$ and observed values of
  $(r_n,r_q,r_p)$ during a stationary phase of the DLCM
  simulation. This gives the estimate $\bar{\lambda} \approx
  1.15$. The same procedure yields $\muprolbar = 1.0$ and
  $\bar{\lambda} = 1.1$ for the other set of experiments that uses different
  parameters shown in \figref{various_spatial}.}

\end{document}